\newcolumntype{Y}{>{\centering\arraybackslash}X}
\sloppy \pagestyle{plain}
\newtheorem{theorem}[equation]{Theorem}
\newtheorem*{theorem*}{Theorem}
\newtheorem{lemma}[equation]{Lemma}
\newtheorem{corollary}[equation]{Corollary}
\theoremstyle{definition}
\newtheorem{definition}[equation]{Definition}
\newtheorem*{definition*}{Definition}
\theoremstyle{remark}
\newtheorem{remark}[equation]{Remark}
\makeatletter\@addtoreset{equation}{section}
\newcommand{\CC}{\mathbb{C}}
\newcommand{\RR}{\mathbb{R}}
\newcommand{\QQ}{\mathbb{Q}}
\newcommand{\ZZ}{\mathbb{Z}}
\newcommand{\HHH}{{\mathscr{H}}}
\newcommand{\KKK}{{\mathscr{K}}}
\newcommand{\chit}{\chi_{\mathrm{top}}}
\newcommand{\ad}{\mathrm{a}}
\newcommand{\cc}{\mathrm{c}}
\newcommand{\bb}{\mathrm{b}}
\newcommand{\hh}{\mathrm{h}}
\newcommand{\Aut}{\operatorname{Aut}}
\newcommand{\GL}{\operatorname{GL}}
\newcommand{\SL}{\operatorname{SL}}
\newcommand{\z}{\operatorname{z}}
\newcommand{\rad}{\operatorname{rad}}
\newcommand{\rk}{\operatorname{rk}}
\def \ge {\geqslant}
\def \le {\leqslant}
\date{}
\title{Automorphism groups of Inoue and Kodaira surfaces}
\author{Yuri Prokhorov}
\author{Constantin Shramov}
\address{
Steklov Mathematical Institute of Russian Academy of Sciences, 8 Gubkina st.,
Moscow, 119991, Russia
\newline
National Research University Higher School of Economics, Laboratory of Algebraic Geometry, 6 Usacheva str., Moscow, 119048, Russia
}
\email{prokhoro@mi.ras.ru}
\email{costya.shramov@gmail.com}
\thanks{
This work is supported by the Russian Science Foundation under grant \textnumero 18-11-00121.}
\begin{document}

\begin{abstract}
We prove that automorphism groups of Inoue and primary Kodaira surfaces are Jordan.
\end{abstract}

\maketitle
\tableofcontents

\section{Introduction}

It sometimes happens that groups of geometric origin are complicated and difficult to study.
On the other hand, in many situations they are easier to access on the level of their finite
subgroups. In particular, the following notion appeared to be very useful.

\begin{definition}[{see \cite[Definition~2.1]{Popov2011}}]
\label{definition:Jordan}
A group~$\Gamma$ is called \emph{Jordan}
(alternatively, we say
that~$\Gamma$ \emph{has Jordan property})
if there is a constant~$J$ such that
for every finite subgroup~\mbox{$G\subset\Gamma$} there exists
a normal abelian subgroup $A\subset G$ of index at most~$J$.
\end{definition}

An old result due to C.\,Jordan says that the group $\GL_n(\CC)$ is
Jordan (see e.g.~\mbox{\cite[Theorem~36.13]{Curtis-Reiner-1962}}).
S.\,Meng and D.-Q.\,Zhang proved in \cite{MengZhang} that an automorphism group
of any projective variety is Jordan. T.\,Bandman and Yu.\,Zarhin proved
a similar result for automorphism groups of quasi-projective surfaces
in~\cite{BandmanZarhin2015},
and also in some particular cases in arbitrary dimension in~\cite{BandmanZarhin2017}.
J.\,H.\,Kim in \cite{Kim} generalized the result of~\cite{MengZhang}
to the case of automorphism groups of compact K\"ahler varieties.
There are many further results on the Jordan property
for diffeomorphism groups of smooth compact manifolds, see
\cite{Popov-Diff}, \cite{CsikosPyberSzabo}, \cite{Riera2016}, \cite{Riera-Spheres}, \cite{Riera-OddCohomology},
\cite{Riera-Symp}, \cite{Riera-HamSymp}, \cite{Ye2017}, and references therein.
There are also numerous results concerning Jordan property for groups
of birational automorphisms of projective varieties, see
\cite{Serre2009}, \cite{Popov2011}, \cite{ProkhorovShramov-RC},
\cite{Prokhorov-Shramov-2013}, \cite{ProkhorovShramov-dim3}, \cite{BandmanZarhin2015a},
\cite{Prokhorov-Shramov-JCr3}, \cite{Prokhorov-Shramov-p-groups}, \cite{Yasinsky2016a}.

Our goal is to study finite subgroups of automorphism groups
of \emph{compact complex surfaces}, that is, connected compact complex manifolds of dimension~$2$.
Recall that such a surface is called \emph{minimal} if it does not contain smooth rational
curves with self-intersection equal to~$(-1)$.
There is a classification of minimal compact complex surfaces, known as
Enriques--Kodaira classification, see e.g.~\cite[Chapter~VI]{BHPV-2004}.
Two important classes of such surfaces are \emph{Inoue surfaces} (see \cite{Inoue1974})
and \emph{primary Kodaira surfaces} (see~\mbox{\cite[\S6]{Kodaira-structure-1}},
\cite[\S\,V.5]{BHPV-2004}). The former are minimal compact complex surfaces of Kodaira dimension $-\infty$,
vanishing algebraic dimension, vanishing second Betti number, and containing no curves.
The latter are minimal compact complex surfaces with trivial canonical class
and first Betti number equal to $3$.
Both of these classes do not contain projective or K\"ahler surfaces,
so the methods of \cite{MengZhang} and \cite{Kim} do not provide an approach to
their automorphism groups.

The main goal of this paper is to prove the following.

\begin{theorem}
\label{theorem:main}
Let $X$ be either an Inoue surface or a primary Kodaira surface.
Then the automorphism group of $X$ is Jordan.
\end{theorem}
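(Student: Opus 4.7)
The plan is to handle the two classes of surfaces separately, in each case via an explicit analysis of $\Aut(X)$ using the description $X = \tilde X/\Gamma$ of the surface as a quotient of its universal cover by its fundamental group acting freely. Any automorphism of $X$ lifts to a biholomorphism of $\tilde X$ normalizing $\Gamma$, so $\Aut(X) \cong N_{\Aut(\tilde X)}(\Gamma)/\Gamma$, and the task is to bound the finite subgroups of this normalizer modulo $\Gamma$.

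For Inoue surfaces, one has $\tilde X = \HH \times \CC$. Since $\HH$ is Kobayashi hyperbolic while $\CC$ is not, every biholomorphism of $\tilde X$ preserves the product decomposition; a standard application of Liouville's theorem in the second variable then shows it has the form $(z,w) \mapsto (f(z),\, a(z)w + b(z))$ with $f$ a biholomorphism of $\HH$ and $a, b$ holomorphic on $\HH$. The explicit presentation of $\Gamma$ for each of the three types of Inoue surfaces (a discrete group of affine-type biholomorphisms built from a matrix in $\SL_3(\ZZ)$ or analogous arithmetic data) severely constrains $f, a, b$: one checks that $f$ must be affine, that $a$ must be constant, and that $b$ must be affine, and a direct computation with the generators of $\Gamma$ then realizes $\Aut(X)$ as a finitely generated group containing an abelian subgroup of finite index. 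Such virtually abelian groups are manifestly Jordan, since for a finite subgroup $G$ the intersection with the abelian normal subgroup has bounded index, and its normal core in $G$ is abelian, normal, and of bounded index.

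For primary Kodaira surfaces, $X$ carries a canonical principal elliptic fibration $\pi \colon X \to B$ over an elliptic curve $B$ with fiber another elliptic curve $T$, which is identified with the Albanese map of $X$. Functoriality of $\Alb$ yields an exact sequence
\[
1 \longrightarrow K \longrightarrow \Aut(X) \longrightarrow Q \longrightarrow 1,
\]
where $Q \subset \Aut(B) = B \rtimes F$ is virtually abelian (hence Jordan), and $K$ consists of automorphisms inducing the identity on $B$. To analyze $K$, I would work on the universal cover $\CC^2$ in affine coordinates $(z_1, z_2)$ adapted to $\pi$: elements of $K$ lift to maps $(z_1, z_2) \mapsto (z_1,\, z_2 + h(z_1))$, and the condition that such a lift normalize $\Gamma$ forces $h$ to be affine with coefficients in explicit lattices. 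This identifies $K$ with an abelian complex Lie group of the form $T \oplus U$, with $T$ the torus of fiberwise translations and $U$ a finitely generated free abelian group.

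It remains to assemble the pieces. The ingredients are: $Q$ is virtually abelian, $K$ is abelian with all $m$-torsion finite and of bounded rank, and the action $Q \to \Aut(K) = \Aut(T) \times \Aut(U)$ lands in a subgroup commensurable with a subgroup of $\GL_r(\ZZ) \times \mu_N$, whose finite subgroups have bounded order by Minkowski's lemma. Hence any finite $G \subset \Aut(X)$ has a subgroup of uniformly bounded index that centralizes $G \cap K$; combining this with the Jordan property of $Q$ produces an abelian normal subgroup of bounded index in $G$. The main obstacle I foresee is exactly this last step: arbitrary metabelian groups fail to be Jordan (as witnessed by extraspecial $p$-groups), so it is essential to use both the bounded-rank structure of $K$ and the arithmeticity of the action $Q \to \Aut(K)$ to push the argument through.
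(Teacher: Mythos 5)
Your strategy is genuinely different from the paper's --- the paper never computes $\Aut(X)$ or a normalizer on the universal cover; instead it shows that for a finite $G\subset\Aut(X)$ the quotient $X/G$ is again a surface of the same type, so that $G\cong\hat{\Gamma}/\Gamma$ for two fundamental groups of the same explicit shape, and then finishes by pure group theory of these lattices. However, in both halves your sketch has a gap at exactly the point where the difficulty sits. For Inoue surfaces, the sentence ``a direct computation with the generators of $\Gamma$ then realizes $\Aut(X)$ as a finitely generated group containing an abelian subgroup of finite index'' is the whole theorem, and it is not justified. Two specific issues are hidden there. First, for types $\mathrm{S^{(\pm)}}$ the subgroup $\Gamma_0$ is a discrete Heisenberg group $\HHH(r)$, and the normalizer modulo $\Gamma$ really does contain non-abelian finite subgroups (finite Heisenberg $p$-groups); excluding such subgroups of unbounded order requires bounding the commutator pairing in terms of $r$, which is the content of Lemma~\ref{lemma:Heisenberg-subgroups} and Corollary~\ref{corollary:Heisenberg-quotients}. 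Second, an element of finite order $k$ in the normalizer modulo $\Gamma$ can have $k$-th power equal to $\upgamma$, so that its action on $\Gamma_0$ (or on $\Gamma_0/\z(\Gamma_0)$) is a $k$-th root in $\GL_n(\ZZ)$ of the fixed matrix $M$; bounding $k$ is a number-theoretic statement (Lemma~\ref{lemma:matrix-root}, resting on Kronecker's theorem on algebraic integers with all conjugates on the unit circle) and does not follow from Minkowski's theorem or from virtual abelianity of the ambient solvable Lie group.

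For primary Kodaira surfaces the final assembly step is wrong as stated: knowing that a bounded-index subgroup of $G$ centralizes $G\cap K$ and that the image of $G$ in $Q$ is almost abelian does not yield a bounded-index abelian subgroup of $G$, because a \emph{central} extension of an abelian group by an abelian group need not be abelian. This is not a hypothetical failure mode: for the surface with invariant $r=p$ one can take $\hat{\Gamma}=\HHH(1)\times\ZZ$ and $\Gamma=\langle\updelta_1^{\,p},\updelta_2^{\,p},\updelta_3^{\,p},\upgamma\rangle\cong\HHH(p)\times\ZZ$, so the extraspecial group of order $p^3$ genuinely acts on $X$, and the bound on the index of an abelian subgroup must come from the invariant $r$ of $X$ itself. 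The missing ingredient is the computation $[\upzeta,\upxi]=\updelta_3^{\,r(a_1b_2-a_2b_1)}$ inside $\HHH(r)$ and the resulting inequality bounding $\gcd(a_1,b_1)$ by $r$ (Lemma~\ref{lemma:Heisenberg-subgroups}, Corollary~\ref{corollary:Heisenberg-quotients}, Lemma~\ref{lemma:Heisenberg-direct}). You correctly identify this obstacle in your last sentence, but the proposal does not overcome it; ``bounded rank of $K$'' plus ``arithmeticity of the $Q$-action'' controls the action, not the extension class, and it is the extension class that produces the non-abelian finite subgroups.
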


One feature of our proof that we find interesting to mention
is that Inoue and Kodaira surfaces are treated by literally the same method
which is based on the fact that they are diffeomorphic to solvmanifolds
(cf.~\cite[Theorem~1]{Hasegawa}),
and for which we never met a proper analog in the projective situation.
It is possible that one can generalize this approach to higher dimensional
solvmanifolds.

The plan of the paper is as follows.
In~\S\ref{section:matrices} we collect some auxiliary facts.
In \S\S\ref{section:lattices-semi-direct}--\ref{section:Heisenberg-direct} we establish
assertions about discrete groups (more precisely, Wang groups, see~\cite{Hasegawa})
that will be used in our analysis of automorphism
groups of surfaces.
In~\S\ref{section:Inoue} we prove Jordan property for automorphism groups of Inoue surfaces.
In~\S\ref{section:Kodaira} we do the same for automorphism groups of primary Kodaira surfaces.
Our main result, Theorem~\ref{theorem:main}, is just a composition of Lemma~\ref{corollary:Inoue} and~\ref{lemma:primary-Kodaira-surface}.

\smallskip
\textbf{Notation.}
For every group~$\Gamma$ we denote by $\z(\Gamma)$ the center of $\Gamma$, and
for a subgroup~\mbox{$\Gamma'\subset\Gamma$} we denote by $\z(\Gamma',\Gamma)$ the centralizer of $\Gamma'$ in $\Gamma$.
Given a compact complex surface~$X$,
we denote by~$\KKK_X$ its canonical line bundle, by~$\bb_i(X)$ its $i$-th
Betti number, by $\chit(X)$ the topological Euler characteristic,
by~$\varkappa(X)$ the Kodaira dimension, and by~$\ad(X)$ the algebraic dimension of~$X$
By $\hh^{p,q}(X)$ we denote the Hodge numbers~\mbox{$\hh^{p,q}=\dim H^q(X,\Omega^p_X)$},
where $\Omega^p_X$ is the sheaf of holomorphic $p$-forms on~$X$.

\smallskip
\textbf{Acknowledgements.}
We are grateful to M.\,Finkelberg, S.\,Gorchinskiy, S.\,Nemirovski, D.\,Osipov, E.\,Rousseau, and M.\,Verbitsky for useful discussions.

\section{Preliminaries}
\label{section:matrices}

In this section we collect several auxiliary facts that will be used in the remaining parts of the paper.

We say that a group $\Gamma$
\emph{has bounded finite subgroups}
if there exists a constant $B=B(\Gamma)$ such that
for any finite subgroup
$G\subset\Gamma$ one has $|G|\leqslant B$. If this is not the case,
we say that $\Gamma$ has \emph{unbounded finite subgroups}.
The following result is due to
H.\,Minkowski (see e.\,g.~\cite[Theorem~1]{Serre2007}).

\begin{theorem}
\label{theorem:Minkowski}
For every $n$ the group $\GL_n(\QQ)$ has bounded finite subgroups.
\end{theorem}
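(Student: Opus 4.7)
The plan is to follow the classical strategy of reducing modulo a small prime after first placing the finite subgroup inside $\GL_n(\ZZ)$.

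First, I would reduce from $\QQ$ to $\ZZ$: given a finite subgroup $G\subset\GL_n(\QQ)$, the $\ZZ$-span $L$ of the $G$-orbit of the standard basis vectors $e_1,\dots,e_n$ is a finitely generated $G$-invariant $\ZZ$-submodule of $\QQ^n$ that contains a basis, hence a $G$-invariant lattice. Choosing any $\ZZ$-basis of $L$ conjugates $G$ into $\GL_n(\ZZ)$, so it suffices to bound $|G|$ for $G\subset\GL_n(\ZZ)$.

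Next, I would fix an odd prime $p$ (for definiteness $p=3$) and consider the reduction homomorphism
\[
\varphi_p\colon \GL_n(\ZZ)\longrightarrow \GL_n(\FF_p).
\]
The key point is that $\ker(\varphi_p)$ is torsion-free. Concretely, suppose $A\in\GL_n(\ZZ)$ with $A=I+p^kB$ for some $k\ge 1$ and some integer matrix $B$ not divisible by $p$, and assume $A$ has finite order $m>1$. Replacing $A$ by a prime-order power, I may assume $m=q$ is a prime. Expanding $A^q=I$ via the binomial formula gives
\[
0=qp^kB+\binom{q}{2}p^{2k}B^2+\dots+p^{qk}B^q,
\]
and analyzing the $p$-adic valuation of each term shows that if $p$ is odd the leading term $qp^kB$ dominates, forcing $B\equiv 0\pmod p$, a contradiction. (The only delicate case is $q=p$, handled by the extra factor $\binom{p}{2}=p(p-1)/2$ which, for $p$ odd, gives higher $p$-adic valuation than $qp^k=p^{k+1}$ only when one tracks the inequality $k+1<2k$, i.e.\ $k\ge 2$; the base case $k=1$ is treated separately using that $p\mid\binom{p}{j}$ for $1\le j\le p-1$.)

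Consequently, $\varphi_p$ restricted to the finite group $G$ is injective, so
\[
|G|\le|\GL_n(\FF_p)|=\prod_{i=0}^{n-1}(p^n-p^i),
\]
and the right-hand side depends only on $n$. Thus $B(\GL_n(\QQ))$ can be taken to equal $|\GL_n(\FF_3)|$.

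The main obstacle is the torsion-freeness of $\ker(\varphi_p)$ for odd $p$: one has to handle the $q=p$ case of the $p$-adic estimate with a bit of care, and it is essential that $p$ be odd, since for $p=2$ the matrix $-I$ lies in the kernel of $\varphi_2$ and has order $2$.
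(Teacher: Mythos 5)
Your proof is correct, but note that the paper does not actually prove this statement: it quotes it as a classical theorem of Minkowski with a pointer to Serre's survey \cite{Serre2007}, so there is no in-paper argument to compare against. What you have written is essentially the standard self-contained proof underlying that reference: (a) averaging the standard lattice over the finite group $G$ (equivalently, taking the $\ZZ$-span of the $G$-orbit of $e_1,\dots,e_n$) to conjugate $G$ into $\GL_n(\ZZ)$, and (b) the Minkowski--Serre lemma that the kernel of reduction $\GL_n(\ZZ)\to\GL_n(\FF_p)$ is torsion-free for odd $p$, giving the explicit bound $|\GL_n(\FF_3)|$. Both steps are sound. The only slightly tangled spot is your parenthetical on the case $q=p$: once you use that $p$ divides $\binom{p}{j}$ for $1\le j\le p-1$ (valid since $p$ is prime), every term $\binom{p}{j}p^{jk}B^j$ with $2\le j\le p-1$ has $p$-adic valuation at least $2k+1>k+1$, and the last term has valuation at least $pk\ge 3k>k+1$, so the leading term $p^{k+1}B$ cannot be cancelled for any $k\ge1$; no separate treatment of $k=1$ is actually needed. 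You are also right that oddness of $p$ is essential, as $-I$ shows for $p=2$. The bound $|\GL_n(\FF_3)|$ is of course far from Minkowski's sharp bound, but the theorem as stated only requires boundedness, so this is perfectly adequate for the role the statement plays in the paper.
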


\begin{lemma}
\label{lemma:group-theory}
Let
$$
1\longrightarrow\Gamma'\longrightarrow\Gamma\longrightarrow\Gamma''
$$
be an exact sequence of groups.
Suppose that $\Gamma'$ is Jordan and $\Gamma''$ has bounded finite
subgroups. Then~$\Gamma$ is Jordan.
\end{lemma}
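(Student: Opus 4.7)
The plan is to take an arbitrary finite subgroup $G\subset\Gamma$, cut it by the normal subgroup $\Gamma'$, apply Jordan property to the intersection, and then upgrade the resulting abelian subgroup to one that is normal in all of $G$ without losing control of its index. Set $G'=G\cap\Gamma'$ and let $J=J(\Gamma')$ and $B=B(\Gamma'')$ be the constants provided by hypothesis. Since $\Gamma'$ is normal in $\Gamma$, the subgroup $G'$ is normal in $G$, and the quotient $G/G'$ injects into the image of $G$ in $\Gamma''$, which is a finite subgroup of $\Gamma''$. Thus $|G:G'|\le B$. By the Jordan property of $\Gamma'$ applied to the finite group $G'$, there is a normal abelian subgroup $A\subset G'$ with $|G':A|\le J$.

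The only obstruction to concluding immediately is that $A$ need not be normal in $G$. To fix this, I would consider the intersection
$$
A^{\ast}=\bigcap_{g\in G}gAg^{-1}.
$$
This $A^{\ast}$ is automatically normal in $G$, and it is abelian because it is contained in the abelian group $A$. The remaining task is to bound $|G:A^{\ast}|$. The key observation is that $G$ acts on the subgroups of $G'$ by conjugation, and the $G$-stabilizer of $A$ contains $G'$ because $A$ is normal in $G'$; hence the $G$-orbit of $A$ has size at most $|G:G'|\le B$, and so $A^{\ast}$ is the intersection of at most $B$ conjugates of $A$ inside $G'$.

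Each of these conjugates has index at most $J$ in $G'$, so by the elementary bound $|G':H_1\cap\cdots\cap H_k|\le\prod_i|G':H_i|$ we obtain $|G':A^{\ast}|\le J^{B}$, and therefore
$$
|G:A^{\ast}|=|G:G'|\cdot|G':A^{\ast}|\le B\cdot J^{B}.
$$
This bound depends only on the exact sequence, not on $G$, so $A^{\ast}$ witnesses the Jordan property for $\Gamma$ with constant $B\cdot J^{B}$. The main technical point I expect to be careful about is the orbit-counting step that bounds the number of $G$-conjugates of $A$; everything else is routine group theory.
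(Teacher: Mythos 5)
Your proof is correct and complete; the paper itself dismisses this lemma with the single word ``Obvious,'' and your argument --- intersecting $G$ with $\Gamma'$, bounding $[G:G']$ by $B$, and passing from the abelian subgroup $A\subset G'$ to its normal core $A^{\ast}=\bigcap_{g\in G}gAg^{-1}$ with the orbit-counting bound $[G':A^{\ast}]\le J^{B}$ --- is exactly the standard argument the authors are taking for granted. The constant $B\cdot J^{B}$ you obtain depends only on the exact sequence, so nothing is missing.
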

\begin{proof}
Obvious.
\end{proof}

The following facts from number theory are well known to experts; we include their proofs for the reader's convenience.

\begin{lemma}\label{lemma:unit}
The following assertions hold.
\begin{enumerate}
\item
Let $\alpha$ be an algebraic integer such that for every Galois
conjugate $\alpha'$ of $\alpha$ one has~\mbox{$|\alpha'|=1$}.
Then $\alpha$ is a root of unity.

\item
Let $n$ be a positive integer. Then there exists a constant $\varepsilon=\varepsilon(n)$
with the following property: if an algebraic integer $\alpha$ of degree $n$ is such that for every Galois
conjugate $\alpha'$ of $\alpha$ one has $1-\varepsilon<|\alpha'|<1+\varepsilon$, then $\alpha$ is a root of unity.
\end{enumerate}
\end{lemma}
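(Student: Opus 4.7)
The plan is to prove (i) by the classical argument of Kronecker and then to deduce (ii) from (i) by a pigeonhole/compactness reduction. Part (ii) is a quantitative refinement of (i), and the relationship between them suggests exactly this structure.

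For (i), I would let $\alpha_1,\dots,\alpha_n$ be the complete list of Galois conjugates of $\alpha$, and for each positive integer $k$ consider the monic polynomial
$$
P_k(x)=\prod_{i=1}^n (x-\alpha_i^k).
$$
Its coefficients are elementary symmetric functions in the $\alpha_i^k$; they are invariant under the Galois group and are algebraic integers, so they lie in $\ZZ$, and since $|\alpha_i|=1$ the $j$-th coefficient is bounded in absolute value by $\binom{n}{j}$. Consequently the family $\{P_k\}_{k\ge 1}$ is finite, and by pigeonhole there exist indices $j<k$ with $P_j=P_k$. Equality of these polynomials as multisets of roots yields a permutation $\pi\in S_n$ satisfying $\alpha_i^k=\alpha_{\pi(i)}^j$ for every $i$; iterating this relation $m$ times, where $m$ is the order of $\pi$ in $S_n$, gives $\alpha_i^{k^m}=\alpha_i^{j^m}$, so each $\alpha_i$ (and hence $\alpha$ itself) is a $(k^m-j^m)$-th root of unity.

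For (ii), I would argue by contradiction. Supposing that no such $\varepsilon=\varepsilon(n)$ exists, one can choose, for each positive integer $k$, an algebraic integer $\alpha_k$ of degree $n$ which is not a root of unity and whose every Galois conjugate lies in the annulus $1-1/k<|z|<1+1/k$. The minimal polynomial of $\alpha_k$ is monic of degree $n$ with integer coefficients bounded in absolute value by $\binom{n}{j}\cdot 2^j$, so only finitely many such polynomials exist; since each has at most $n$ roots, the possible values of $\alpha_k$ form a finite set. After passing to a subsequence I may therefore assume that all $\alpha_k$ equal a single algebraic integer $\alpha$ of degree $n$ that is not a root of unity. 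But then every Galois conjugate $\alpha'$ of this fixed $\alpha$ satisfies $|\alpha'|\in(1-1/k,1+1/k)$ for every $k$, which forces $|\alpha'|=1$; part (i) now forces $\alpha$ to be a root of unity, a contradiction.

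I do not foresee any essential obstacle: (i) is classical, and the only delicate point in (ii) is the finiteness of the set of candidate minimal polynomials, which follows at once from the uniform coefficient bound together with the fixed degree $n$. The explicit value of $\varepsilon(n)$ is not required, so no quantitative estimate needs to be tracked through the argument.
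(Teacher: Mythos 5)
Your proof is correct, but part (i) takes a genuinely different route from the paper. The paper proves (i) by observing that complex conjugation sends $\alpha$ to $\alpha^{-1}$, so that $\alpha$ and $\alpha^{-1}$ are both algebraic integers; hence all non-archimedean valuations of $\alpha$ equal $1$, and since the archimedean ones equal $1$ by hypothesis, the conclusion follows from a cited result in Cassels--Fr\"ohlich. You instead give the classical self-contained Kronecker pigeonhole argument: the polynomials $P_k(x)=\prod_i(x-\alpha_i^k)$ have integer coefficients bounded by $\binom{n}{j}$, so $P_j=P_k$ for some $j<k$, and iterating the resulting permutation identity forces $\alpha^{k^m-j^m}=1$. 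Your version is more elementary and avoids the external reference; the paper's is shorter given the citation and makes the role of the product formula transparent. For part (ii) the two arguments are essentially identical in substance --- both rest on the finiteness of the set of monic irreducible integer polynomials of degree $n$ with coefficients bounded via the conjugate bound $|\alpha'|<2$ --- with the paper arguing directly (choosing $\mu$ so that no root of any polynomial in the finite set lies strictly inside the annulus off the unit circle) and you arguing contrapositively via a subsequence extraction; this is only a cosmetic difference.
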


\begin{proof}
To prove assertion~(i), fix an embedding $\QQ(\alpha)\subset\CC$.
Then $\bar{\alpha}$ is a root of the minimal polynomial of $\alpha$ over~$\QQ$.
Hence $\bar{\alpha}$ is an algebraic integer. On the other hand, one has
$\bar{\alpha}=\alpha^{-1}$. Since both~$\alpha$ and~$\alpha^{-1}$ are algebraic integers,
we conclude that all non-archimedean
valuations of~$\alpha$ equal~$1$. At the
same time all archimedean valuations of~$\alpha$ equal~$1$ by assumption.
Therefore, assertion~(i) follows from~\mbox{\cite[Lemma~II.18.2]{CasselsFrolich}}.

Now consider an algebraic integer $\alpha$ of degree $n$ such that all its Galois
conjugates have absolute values less than, say, $2$. The absolute values of the coefficients of its
minimal polynomial are bounded by some constant $C=C(n)$; for instance, one can take~\mbox{$C=2^{2n}$}.
Consider the set of polynomials with integer coefficients
$$
\mathcal{Q}=\bigl\{Q=x^n+a_{n-1}x^{n-1}+\ldots+a_0 \,\bigl|\, |a_i|\le C, \text{\ and $Q$ is irreducible} \bigr\}.
$$
The set $\mathcal{Q}$ is finite. Put
$$
\Pi=\bigl\{(x_1,\ldots,x_n) \,\bigl|\, x_1,\ldots,x_n \text{\ are different roots of some polynomial\ }
Q\in\mathcal{Q} \bigr\}\subset\CC^n.
$$
Then $\Pi$ is a finite subset of $\CC^n$; furthermore,
all algebraic integers of degree $n$ such that all their Galois conjugates have absolute values at most $2$
appear as coordinates
of the points of~$\Pi$. There is a number $\mu=\mu(n)$ such that for every point
$P=(\alpha_1,\ldots,\alpha_n)\in \Pi$
the inequalities
$1-\mu<|\alpha_i|<1+\mu$ for all $i$
imply that $|\alpha_i|=1$ for all $i$. In the latter case
$\alpha_i$ are roots of unity by assertion~(i). Thus it remains to put
$\varepsilon=\min(\mu,1)$ to prove assertion~(ii).
\end{proof}

\begin{lemma}\label{lemma:matrix-root}
Let $M\in\GL_n(\ZZ)$ be a matrix. Suppose that for every $C$ there is an integer~\mbox{$k>C$} such that there exists a
matrix $R_k\in\GL_n(\ZZ)$ with $R_k^{\, k}=M$. Then all eigen-values of $M$ are roots of unity.
\end{lemma}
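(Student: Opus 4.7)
The plan is to use Lemma~\xref{lemma:unit}(ii) applied to the eigenvalues of the roots $R_k$ rather than to $M$ directly. The key point is that although we have no bound on how many times an eigenvalue of $M$ itself can be close to the unit circle, the eigenvalues of the $k$-th roots $R_k$ are forced onto a shrinking annulus around the unit circle as $k$ grows.

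More precisely, first I would fix eigenvalues $\lambda_1,\ldots,\lambda_n\in\CC$ of $M$ (with multiplicity), and set $L=\max_i|\lambda_i|$, which is a finite positive constant depending only on $M$. For each admissible $k$ with $R_k^{\, k}=M$, the eigenvalues $\mu_1,\ldots,\mu_n$ of $R_k$ are (after reindexing) $k$-th roots of the $\lambda_i$, so their absolute values all lie in the interval
$$
\bigl[L^{-1/k},\,L^{1/k}\bigr].
$$
As $k\to\infty$ this interval contracts to $\{1\}$, so for $k$ large enough we have $1-\varepsilon<|\mu_j|<1+\varepsilon$ for every $j$, where $\varepsilon=\varepsilon(n)$ is the constant of Lemma~\xref{lemma:unit}(ii).

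Next I would check that the degree and Galois-conjugacy hypotheses of Lemma~\xref{lemma:unit}(ii) are satisfied for each eigenvalue $\mu_j$ of $R_k$. Since $R_k\in\GL_n(\ZZ)$, its characteristic polynomial $p_{R_k}$ has integer coefficients, is monic of degree~$n$, and has $\mu_j$ as a root, so $\mu_j$ is an algebraic integer of degree at most~$n$. Its minimal polynomial divides $p_{R_k}$, hence every Galois conjugate of $\mu_j$ is itself an eigenvalue of $R_k$, and in particular has absolute value in $(1-\varepsilon,1+\varepsilon)$ by the previous paragraph. (To invoke Lemma~\xref{lemma:unit}(ii) with the fixed integer~$n$ I would replace $\mu_j$ by a suitable power so that its degree becomes exactly~$n$, or equivalently apply the lemma with $n$ replaced by $\deg\mu_j$ and take the worst $\varepsilon$ over all degrees $\le n$.) Lemma~\xref{lemma:unit}(ii) then implies each $\mu_j$ is a root of unity, whence all eigenvalues $\lambda_i=\mu_i^{\, k}$ of $M$ are roots of unity.

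The only mildly non-trivial step is the bookkeeping in the second paragraph: guaranteeing that the constant $\varepsilon$ from Lemma~\xref{lemma:unit}(ii) can be chosen uniformly in $k$, which is automatic because $\varepsilon$ depends only on the ambient degree $n$, not on $k$ or on $R_k$. Everything else reduces to the elementary observation $L^{\pm 1/k}\to 1$.
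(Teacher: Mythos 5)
Your proof is correct and follows essentially the same route as the paper's: both apply Lemma~\ref{lemma:unit}(ii) to the eigenvalues of $R_k$, using that they are algebraic integers of degree at most $n$ whose Galois conjugates are again eigenvalues of $R_k$ and whose absolute values are squeezed to $1$ as $k\to\infty$. (Two cosmetic points: the correct lower bound for $|\mu_j|$ is $l_{\min}^{1/k}$, where $l_{\min}$ is the smallest absolute value of an eigenvalue of $M$, rather than $L^{-1/k}$ --- harmless, since it still tends to $1$ --- and of your two suggested fixes for the degree bookkeeping only the second, taking the worst $\varepsilon$ over all degrees at most $n$, actually works, as raising to a power need not increase the degree to exactly $n$.)
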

\begin{proof}
Let $\lambda_k$ be an eigen-value of $R_k$. Then $\lambda_k$ is an algebraic integer
of degree at most~$n$, because it is a root of the characteristic
polynomial of the matrix $R_k$. Moreover, $\lambda_k^{\, k}$ is an eigen-value of $M$. This means
that
\begin{equation}\label{eq:strip}
\sqrt[k]{l_{min}}\le |\lambda_k|\le \sqrt[k]{l_{max}},
\end{equation}
where $l_{min}$ and $l_{max}$ are the minimal
and the maximal absolute values of the eigen-values of the matrix $M$, respectively.
Both of the above bounds converge to $1$ when $k$ goes to infinity.
All Galois conjugates of $\lambda_k$ are also eigen-values of $R_k$, hence
the inequality~\eqref{eq:strip} holds for them as well.
Therefore, for $k$ large enough all eigen-values of $R_k$ are roots of unity by Lemma~\ref{lemma:unit}(ii),
and thus so are the eigen-values of $M$.
\end{proof}

\section{Lattices and semi-direct products}
\label{section:lattices-semi-direct}

Consider the groups~\mbox{$\Gamma_0\cong \ZZ^3$} and~\mbox{$\Gamma_1\cong \ZZ$}.
Here and below we will use multiplicative notation for the operations in all arising groups, in particular,
in~$\Gamma_0$ and $\Gamma_1$.
Let $\upgamma$ be a generator of $\Gamma_1$. Fix a basis $\{\updelta_1,\updelta_2,\updelta_3\}$ in~$\Gamma_0$.
Then~\mbox{$\operatorname{End}(\Gamma_0)$}
can be identified with~\mbox{$\operatorname{Mat}_{3\times 3}(\ZZ)$}, and so for any integral
$3\times 3$-mat\-rix~\mbox{$M=(m_{j,i})$} one can define its action on~$\Gamma_0$ via
\[
M (\updelta_i)= \updelta_1^{m_{1,i}}\updelta_2^{m_{2,i}}\updelta_3^{m_{3,i}}.
\]
If $M\in \GL_3(\ZZ)$,
this defines a semi-direct product $\Gamma=\Gamma_0 \rtimes \Gamma_1$.

The following facts are easy exercises in group theory.

\begin{lemma}\label{lemma:lattice-exercise}
Suppose that the matrix $M$ does not have eigen-values equal to~$1$. Then the following assertions hold:
\begin{enumerate}
\item\label{lemma:lattice-exercisei}
$[\Gamma,\Gamma]=\operatorname{Im}(M-\mathrm{Id})\subset\Gamma_0$ is a free abelian subgroup of rank $3$;

\item\label{lemma:lattice-exerciseii}
one has $\Gamma_0=\z([\Gamma,\Gamma],\Gamma)$; in particular, $\Gamma_0$ is a characteristic subgroup of $\Gamma$;

\item\label{lemma:lattice-exerciseiii} the center $\z(\Gamma)$ is trivial.
\end{enumerate}
\end{lemma}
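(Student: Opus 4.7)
The plan is to work additively on $\Gamma_0\cong\ZZ^3$ and translate everything into matrix algebra via the rule $\upgamma\updelta\upgamma^{-1}=M(\updelta)$. The basic commutator identity is
\[
[\upgamma^a,\updelta]=(M^a-\mathrm{Id})(\updelta),
\]
which, together with the factorization $M^a-\mathrm{Id}=(M-\mathrm{Id})\bigl(M^{a-1}+\cdots+\mathrm{Id}\bigr)$, will drive all three assertions.

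For assertion~\ref{lemma:lattice-exercisei}, the inclusion $\operatorname{Im}(M-\mathrm{Id})\subseteq[\Gamma,\Gamma]$ is the case $a=1$ above. For the converse, since $\Gamma/\Gamma_0\cong\ZZ$ is abelian one has $[\Gamma,\Gamma]\subseteq\Gamma_0$, and a direct expansion of a general commutator $[\updelta_1\upgamma^{a_1},\updelta_2\upgamma^{a_2}]$ yields a $\ZZ$-linear combination of terms of the shape $(M^a-\mathrm{Id})(\updelta)$, each of which lies in $\operatorname{Im}(M-\mathrm{Id})$ by the factorization. The rank claim is then injectivity of $M-\mathrm{Id}$ on $\Gamma_0$, which follows from $\det(M-\mathrm{Id})\neq 0$, a direct consequence of the eigenvalue hypothesis.

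For assertion~\ref{lemma:lattice-exerciseii}, the inclusion $\Gamma_0\subseteq\z([\Gamma,\Gamma],\Gamma)$ is trivial as $\Gamma_0$ is abelian and contains $[\Gamma,\Gamma]$. For the opposite inclusion, if $g=\updelta\upgamma^k\in\z([\Gamma,\Gamma],\Gamma)$, then conjugation by $g$ on $\Gamma_0$ is simply $M^k$, and it must fix every element of $\operatorname{Im}(M-\mathrm{Id})$. Since that image has full rank in $\Gamma_0$, we get $M^k=\mathrm{Id}$ on $\Gamma_0\otimes\QQ$, and the eigenvalue hypothesis then forces $k=0$, whence $g\in\Gamma_0$. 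The fact that $\Gamma_0$ is characteristic is then automatic, because $\z([\Gamma,\Gamma],\Gamma)$ is defined purely in terms of the group structure of $\Gamma$. For assertion~\ref{lemma:lattice-exerciseiii}, any $z\in\z(\Gamma)$ in particular centralizes $[\Gamma,\Gamma]$ and so lies in $\Gamma_0$ by~\ref{lemma:lattice-exerciseii}; commutation of $z$ with $\upgamma$ translates to $(M-\mathrm{Id})(z)=0$, and hence $z=0$ by injectivity of $M-\mathrm{Id}$.

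The step I expect to require the most care is the passage from ``$M^k=\mathrm{Id}$ on $\Gamma_0\otimes\QQ$'' to ``$k=0$'' in part~\ref{lemma:lattice-exerciseii}: this really needs $M$ to have infinite order, which is strictly stronger than ``$1$ is not an eigenvalue of $M$'' (witness $M=-\mathrm{Id}$, where $M^2=\mathrm{Id}$). In the Inoue and Kodaira applications the relevant matrices have an eigenvalue that is not a root of unity, so the stronger property holds automatically and the argument goes through as above.
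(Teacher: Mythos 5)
Your argument follows the same route as the paper's: identify $[\Gamma,\Gamma]$ with $\operatorname{Im}(M-\mathrm{Id})$ inside $\Gamma_0$, reduce assertion~(ii) to showing that no element $\updelta\upgamma^k$ with $k\neq 0$ centralizes a full-rank sublattice of $\Gamma_0$, and deduce~(iii) from~(ii) together with the injectivity of $M-\mathrm{Id}$. Your treatment of~(i) is in fact slightly more careful than the paper's, since you check that an arbitrary commutator $[\updelta\upgamma^a,\updelta'\upgamma^b]$ lands in $\operatorname{Im}(M-\mathrm{Id})$ via the factorization $M^a-\mathrm{Id}=(M-\mathrm{Id})(M^{a-1}+\dots+\mathrm{Id})$, rather than only considering the commutators of the chosen generators.

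The caveat you raise at the end is genuine, and you should know that the paper's own proof trips over exactly this point: it asserts that $\upgamma^k\in\z(\Gamma_0,\Gamma)$ implies ``$(M-\mathrm{Id})^k=0$,'' whereas the correct conclusion is $M^k=\mathrm{Id}$, and the latter does \emph{not} contradict the stated hypothesis for a general $M\in\GL_3(\ZZ)$. Your witness $M=-\mathrm{Id}$ shows more: for that choice, $\upgamma^2$ is central, so assertions~(ii) and~(iii) of the lemma are actually false as stated, not merely unproved. The statement is rescued in any of the following ways: assume $M$ has infinite order (equivalently, for the centralizer step, $M^k\neq\mathrm{Id}$ for $k\neq 0$); assume some eigenvalue of $M$ is not a root of unity, as is already done in Lemma~\xref{lemma:lattices-semi-direct} where assertion~(ii) is invoked; or restrict to $M\in\SL_3(\ZZ)$, since a finite-order element of $\SL_3(\ZZ)$ has all eigenvalues roots of unity with product $1$, and a short case analysis (the unique real eigenvalue would have to be $-1$, forcing the remaining pair to have product $-1$, which is impossible both for a complex-conjugate pair and for a pair in $\{\pm1\}$ avoiding $+1$) shows it must have $1$ as an eigenvalue. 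In the intended application to Inoue surfaces of type $\mathrm{S_M}$ one has $M\in\SL_3(\ZZ)$ with a real eigenvalue $\upalpha>1$, so all three repairs apply and nothing downstream is affected; but your instinct that the hypothesis as written is too weak for parts~(ii) and~(iii) is correct.
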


\begin{proof}
The subgroup $[\Gamma,\Gamma]\subset\Gamma_0$ is generated by the commutators
$$
[\upgamma,\updelta_i]=\upgamma\updelta_i\upgamma^{-1}\updelta_i^{-1}=
\updelta_1^{m_{1,i}}\updelta_2^{m_{2,i}}\updelta_3^{m_{3,i}}\updelta_i^{-1},
$$
where $i=1,2,3$.
Therefore, it can be identified with the sublattice in $\Gamma_0$ that is the image of
the operator $M-\mathrm{Id}$. Since the latter operator is non-degenerate
by assumption, we conclude that $[\Gamma,\Gamma]$ is a free abelian group of rank~$3$.
This proves assertion~\ref{lemma:lattice-exercisei}.

Since $\Gamma_0$ has no torsion, by assertion~\ref{lemma:lattice-exercisei}
we have $\z([\Gamma,\Gamma],\Gamma)= \z(\Gamma_0,\Gamma)\supset \Gamma_0$.
Thus, to prove assertion~\ref{lemma:lattice-exerciseii} it is
enough to show that no non-trivial power of $\upgamma$ is contained in
the above centralizer.
But if $\upgamma^k\in\z(\Gamma_0,\Gamma)$, then $(M-\mathrm{Id})^k=0$.
This contradicts our assumptions, which proves assertion~\ref{lemma:lattice-exerciseii}.

By assertion~\ref{lemma:lattice-exerciseii}, we have $\z(\Gamma)\subset \Gamma_0$.
Considering the commutators with $\upgamma$, we see that~\mbox{$\z(\Gamma)\subset \operatorname{Ker}(M-\mathrm{Id})$.}
Thus, we have $\z(\Gamma)=\{1\}$,
which proves assertion~\ref{lemma:lattice-exerciseiii}.
\end{proof}

It also appears that one can easily describe all normal subgroups of finite index in $\Gamma$
(and actually do it in a slightly more general setting).

\begin{lemma}\label{lemma:Inoue-normal-subgroup}
Let $\Delta_0$ be an arbitrary group (and $\Gamma_1\cong\ZZ$ as before
be a cyclic group generated by an element $\upgamma$).
Consider a semi-direct product $\Delta=\Delta_0\rtimes\Gamma_1$.
Let $\Delta'\subset \Delta$ be a normal subgroup of finite index. Then
\begin{enumerate}
\item\label{lemma:Inoue-normal-subgroupi}
$\Delta'=\Delta_0' \rtimes \Gamma_1'$, where
$\Delta_0'= \Delta'\cap \Delta_0$,
and $\Gamma_1'\cong\ZZ$ is generated by $\upgamma^k\updelta'$ for some
positive integer $k$ and $\updelta'\in \Delta_0$;

\item\label{lemma:Inoue-normal-subgroupii}
$\Delta/\Delta'$ has a normal subgroup of index $k$
isomorphic to $\Delta_0/\Delta_0'$.
\end{enumerate}
\end{lemma}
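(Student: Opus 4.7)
The plan is to exploit the quotient map $\pi\colon\Delta\to\Gamma_1$ together with the fact that its image is free abelian. First, set $\Delta_0'=\Delta'\cap\Delta_0$. Since $\Delta_0$ is the kernel of $\pi$, the group $\Delta_0'$ is precisely the kernel of the restriction~$\pi|_{\Delta'}$. The image $\pi(\Delta')$ is a subgroup of finite index of $\Gamma_1\cong\ZZ$, hence it is of the form $\langle\upgamma^k\rangle$ for a unique positive integer $k$. In particular, there exists some element of $\Delta'$ that maps to $\upgamma^k$, and any such element has the form $\upgamma^k\updelta'$ with $\updelta'\in\Delta_0$.

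For assertion~\ref{lemma:Inoue-normal-subgroupi}, I would then look at the short exact sequence
\[
1\longrightarrow \Delta_0'\longrightarrow \Delta'\stackrel{\pi}{\longrightarrow} \langle\upgamma^k\rangle\longrightarrow 1.
\]
Since $\langle\upgamma^k\rangle\cong\ZZ$ is free, the element $\upgamma^k\updelta'$ chosen above gives a splitting, and the image $\Gamma_1'=\langle\upgamma^k\updelta'\rangle$ is infinite cyclic (its projection already has infinite order). This yields $\Delta'=\Delta_0'\rtimes\Gamma_1'$, which is the required form.

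For assertion~\ref{lemma:Inoue-normal-subgroupii}, I would use the second isomorphism theorem applied inside $\Delta$. Since $\Delta_0$ is normal in $\Delta$, the product $\Delta_0\Delta'$ is a normal subgroup of $\Delta$, so its image $H=\Delta_0\Delta'/\Delta'$ is a normal subgroup of $\Delta/\Delta'$. On the one hand,
\[
H=\Delta_0\Delta'/\Delta'\;\cong\; \Delta_0/(\Delta_0\cap\Delta')=\Delta_0/\Delta_0'.
\]
On the other hand, the index satisfies
\[
[\Delta/\Delta':H]=[\Delta:\Delta_0\Delta']=[\Gamma_1:\pi(\Delta')]=k,
\]
where the middle equality comes from quotienting by $\Delta_0$ and using that $\pi(\Delta_0\Delta')=\pi(\Delta')$.

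There is essentially no hard step: the only thing one must watch is that the splitting of the exact sequence for $\Delta'$ is automatic because the quotient is free abelian, and that $\upgamma^k\updelta'$ genuinely has infinite order in $\Delta$, which follows from the fact that its projection $\upgamma^k$ already does. The rest is a direct application of standard isomorphism theorems.
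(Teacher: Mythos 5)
Your proposal is correct and follows essentially the same route as the paper: identify $\pi(\Delta')=\langle\upgamma^k\rangle$, lift a generator to get the splitting $\Delta'=\Delta_0'\rtimes\Gamma_1'$, and obtain assertion (ii) from the image of $\Delta_0$ in $\Delta/\Delta'$ via the second isomorphism theorem. No gaps.
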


\begin{proof}
The subgroup $\Delta'/\Delta_0'$ has finite index in $\Gamma_1\cong\ZZ$. Thus it is generated
by the image of an element~$\upgamma^k$ for some positive integer~$k$. Choose a preimage $\uptheta=\upgamma^k\updelta$ in $\Delta'$,
where~\mbox{$\updelta\in\Delta_0'$}.
Let $\Gamma_1'$ be the subgroup of $\Delta'$ generated by $\uptheta$.
Then $\Delta'$ is generated by its subgroups $\Delta_0'$ and~\mbox{$\Gamma_1'\cong\ZZ$}. This proves assertion~\ref{lemma:Inoue-normal-subgroupi}.

Consider the image $\bar{\Delta}_0$ of $\Delta_0$ in the quotient group $\Delta/\Delta'$. It is isomorphic to $\Delta_0/\Delta_0'$. Furthermore, the quotient
$(\Delta/\Delta')/\bar{\Delta}_0$ maps isomorphically to the quotient of $\Gamma_1$ by the subgroup
generated by $\upgamma^k$. Thus, $\bar{\Delta}_0$ is a normal subgroup of index $k$ in~$\Delta/\Delta'$.
This proves assertion~\ref{lemma:Inoue-normal-subgroupi}.
\end{proof}

\begin{lemma}\label{lemma:lattices-semi-direct}
Let $\Gamma=\Gamma_0\rtimes\Gamma_1$ be a semi-direct product defined by a matrix $M$ as above.
Suppose that the matrix $M$ does not have eigen-values equal to~$1$,
and at least one of its eigen-values is not a root of unity. Then
there exists a constant $\nu=\nu(\Gamma)$ with the following property.

Let~\mbox{$\hat{\Gamma}=\hat{\Gamma}_0\rtimes\hat{\Gamma}_1$}, where~\mbox{$\hat{\Gamma}_0\cong\ZZ^3$}
and $\hat{\Gamma}_1\cong\ZZ$. Suppose that $\hat{\Gamma}$ contains~$\Gamma$ as a normal subgroup.
Then the group $G=\hat{\Gamma}/\Gamma$ is finite and has a normal abelian subgroup
of index at most~$\nu$.
\end{lemma}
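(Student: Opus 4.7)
The plan is to reduce the claim to controlling a concrete normal abelian subgroup of $G$ produced by Lemma~\ref{lemma:Inoue-normal-subgroup}, and to use the hypothesis on the eigen-values of $M$ only at the very end, in order to bound the index of that subgroup. First, I would like $G$ to be finite, for which I would invoke the Hirsch length: both $\Gamma$ and $\hat{\Gamma}$ are polycyclic of Hirsch length $4$ (each is an extension of $\ZZ$ by $\ZZ^{3}$), so the inclusion $\Gamma\subset\hat{\Gamma}$ already forces $[\hat{\Gamma}:\Gamma]<\infty$.

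Next I apply Lemma~\ref{lemma:Inoue-normal-subgroup} to the semi-direct product $\hat{\Gamma}=\hat{\Gamma}_0\rtimes\hat{\Gamma}_1$ with its normal subgroup $\Gamma$ of finite index. This furnishes a positive integer $k$ and an element $\hat{\updelta}\in\hat{\Gamma}_0$ with
\[
\Gamma=(\Gamma\cap\hat{\Gamma}_0)\rtimes\langle\hat{\upgamma}^{k}\hat{\updelta}\rangle,
\]
and identifies $A:=\hat{\Gamma}_0\Gamma/\Gamma\cong\hat{\Gamma}_0/(\Gamma\cap\hat{\Gamma}_0)$ as a normal subgroup of $G$ of index $k$. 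Since $\hat{\Gamma}_0$ is abelian, so is $A$. What remains is to show that $A$ is finite and that $k$ is bounded in terms of $\Gamma$ alone.

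Finiteness of $A$ would follow from the stronger claim $\Gamma\cap\hat{\Gamma}_0=\Gamma_0$. The intersection is abelian and contains $[\Gamma,\Gamma]$ (because $\Gamma/(\Gamma\cap\hat{\Gamma}_0)\hookrightarrow\ZZ$ is abelian), so by Lemma~\ref{lemma:lattice-exercise}(ii) it lies in $\z([\Gamma,\Gamma],\Gamma)=\Gamma_0$. Both groups are free abelian of rank $3$ inside $\Gamma$, so $\Gamma\cap\hat{\Gamma}_0$ has finite index in $\Gamma_0$; this forces the image of $\Gamma_0$ in $\hat{\Gamma}/\hat{\Gamma}_0\cong\ZZ$ to be a finite subgroup, hence trivial, giving $\Gamma_0\subset\hat{\Gamma}_0$ and the desired equality. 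Then $A\cong\hat{\Gamma}_0/\Gamma_0$ is a finite abelian group.

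The main obstacle is bounding $k$, and it is here that the assumption on the eigen-values of $M$ has to be used. The generator $\upgamma$ of $\Gamma_1$ and the element $\hat{\upgamma}^{k}\hat{\updelta}$ both lift a generator of $\Gamma/\Gamma_0\cong\ZZ$, so $\upgamma$ differs from $\hat{\upgamma}^{\pm k}$ by an element of $\hat{\Gamma}_0$; since $\hat{\Gamma}_0$ is abelian, the conjugation action of $\upgamma$ on $\Gamma_0\subset\hat{\Gamma}_0$ therefore coincides with that of $\hat{\upgamma}^{\pm k}$. In matrix terms this means $M=\hat{M}^{\pm k}$ as $\QQ$-linear automorphisms of $\hat{\Gamma}_0\otimes\QQ$, where $\hat{M}\in\GL_{3}(\ZZ)$ is the matrix of the $\hat{\upgamma}$-action on $\hat{\Gamma}_0$. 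Each eigen-value of $\hat{M}$ is then an algebraic integer of degree at most $3$ whose $k$-th power is an eigen-value of $M^{\pm 1}$. Repeating the argument from the proof of Lemma~\ref{lemma:matrix-root} and invoking Lemma~\ref{lemma:unit}(ii) with $n=3$, once $k$ exceeds a constant depending only on $M$ all eigen-values of $\hat{M}$ must be roots of unity; but then the same holds for the eigen-values of $M=\hat{M}^{\pm k}$, contradicting the hypothesis. The resulting upper bound on $k$ is the sought constant $\nu=\nu(\Gamma)$.
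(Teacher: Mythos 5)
Your proposal is correct and follows essentially the same route as the paper: identify $\Gamma\cap\hat{\Gamma}_0=\Gamma_0$ (via the characterization of $\Gamma_0$ as the centralizer of $[\Gamma,\Gamma]$ from Lemma~\ref{lemma:lattice-exercise}), apply Lemma~\ref{lemma:Inoue-normal-subgroup} to get a normal abelian subgroup of index $k$ in $G$, and bound $k$ by comparing the conjugation action of $\upgamma$ with a $k$-th power of an integral matrix and invoking the eigen-value argument of Lemmas~\ref{lemma:unit} and~\ref{lemma:matrix-root}. You merely spell out a few steps the paper leaves implicit (finiteness of $G$, the equality $\Gamma\cap\hat{\Gamma}_0=\Gamma_0$, and the possible sign $\pm k$), so no substantive difference.
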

\begin{proof}
The group $G$ is finite for obvious reasons.
By Lemma~\ref{lemma:lattice-exercise}(ii) we have~\mbox{$\Gamma_0=\Gamma\cap\hat{\Gamma}_0$}.
Thus by Lemma~\ref{lemma:Inoue-normal-subgroup}
there is a positive integer $k$ with the following properties:
first, the subgroup $\Gamma_1\subset\Gamma$ is generated by an element $\hat{\upgamma}^k\hat{\updelta}$, where $\hat{\upgamma}$ is
a generator of $\hat{\Gamma}_1$, and $\hat{\updelta}$ is an element of $\hat{\Gamma}_0$;
second, the group $G$ contains a normal abelian subgroup
of index~$k$.
Note that the subgroup $\Gamma_0$ is normal in $\hat{\Gamma}$,
because $\Gamma$ is normal in $\hat{\Gamma}$, while
$\Gamma_0$ is a characteristic subgroup of $\Gamma$ by Lemma~\ref{lemma:lattice-exercise}(ii).
Let $R\in\GL_3(\ZZ)$ be the matrix that defines the semi-direct product
$\hat{\Gamma}=\hat{\Gamma}_0\rtimes\hat{\Gamma}_1$.
Considering the action of the element~$\hat{\upgamma}$
on the lattice~\mbox{$\hat{\Gamma}_0\cong\ZZ^3$} and its sublattice
$\Gamma_0\cong\ZZ^3$,
we see that $R^k$ is conjugate to $M$.
Thus $k$ is bounded by some constant $\nu$ that depends only on $M$
(that is, only on $\Gamma$) by Lemma~\ref{lemma:matrix-root}.
\end{proof}

\section{Heisenberg groups}
\label{section:Heisenberg}

Let $r$ be a positive integer. Consider a group
\begin{equation}\label{eq:Heisenberg-generators}
\HHH(r)=\langle\updelta_1,\updelta_2,\updelta_3\mid [\updelta_i,\updelta_3]=1, [\updelta_1,\updelta_2]=\updelta_3^r\rangle.
\end{equation}
One can think about $\HHH(r)$ as the group of all matrices
\begin{equation*}%\label{eq:gamma}
\left(
\begin{array}{ccc}
1 & a & \frac{c}{r}\\
0 & 1 & b\\
0 & 0 & 1
\end{array}
\right)\in\GL_3(\QQ),
\end{equation*}
where $a$, $b$, and $c$ are integers.
One can choose the generators so that the element~$\updelta_1$ corresponds to $a=1$, $b=c=0$,
the element $\updelta_2$ corresponds to $a=0$, $b=1$, $c=0$,
and the element $\updelta_3$ corresponds to $a=b=0$, $c=1$.
The group $\HHH(1)$ is known as the discrete Heisenberg group.
The center $\z(\HHH(r))\cong\ZZ$ is generated by~$\updelta_3$,
while the commutator~\mbox{$[\HHH(r),\HHH(r)]$} is generated by~$\updelta_3^r$.
For the quotient group~\mbox{$\bar{\HHH}(r)=\HHH(r)/\z(\HHH(r))$} one has $\bar{\HHH}(r)\cong\ZZ^2$.

\begin{lemma}\label{lemma:Heisenberg-finite-index}
Every subgroup of finite index in $\HHH(r)$ is isomorphic to $\HHH(r')$ for some positive integer~$r'$.
Every subgroup of infinite index in $\HHH(r)$ is abelian.
\end{lemma}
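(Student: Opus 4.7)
The plan is to analyze any subgroup $H\subset\HHH(r)$ through its interaction with the center $Z=\z(\HHH(r))=\langle\updelta_3\rangle\cong\ZZ$ and with the quotient $\bar{\HHH}(r)\cong\ZZ^2$. Writing $\bar H$ for the image of $H$ in $\bar{\HHH}(r)$, one gets a short exact sequence $1\to H\cap Z\to H\to\bar H\to 1$ together with the multiplicativity of index in a central extension
$$[\HHH(r):H]=[Z:H\cap Z]\cdot[\ZZ^2:\bar H].$$
This decomposition is what lets us treat the finite- and infinite-index cases in parallel.

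For the finite-index case, both factors on the right must be finite, so $H\cap Z=\langle\updelta_3^s\rangle$ for some $s\ge 1$, while $\bar H\subset\ZZ^2$ is a full-rank sublattice of some positive index $d$. I would fix a $\ZZ$-basis $\bar h_1,\bar h_2$ of $\bar H$, lift it to $h_1,h_2\in H$, and compute the commutator in the normal form $\updelta_1^a\updelta_2^b\updelta_3^c$, obtaining $[h_1,h_2]=\updelta_3^{r(a_1b_2-a_2b_1)}=\updelta_3^{\pm rd}$. Since this lies in $H\cap Z=\langle\updelta_3^s\rangle$, one gets $s\mid rd$, and I set $r'=rd/s$. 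The elements $h_1,h_2,\updelta_3^s$ generate $H$ (any element is of the form $h_1^m h_2^n\updelta_3^{sc}$ after reducing modulo $Z$) and, since $\updelta_3^s$ is central in $\HHH(r)$, satisfy exactly the defining relations of $\HHH(r')$. This yields a surjection $\HHH(r')\twoheadrightarrow H$ which I would upgrade to an isomorphism by comparing normal forms: in $\HHH(r)$ the triple $(A,B,C)$ in $\updelta_1^A\updelta_2^B\updelta_3^C$ is unique, and the condition $a_1b_2-a_2b_1\ne 0$ forces the exponents $(m,n,c)$ of $h_1^m h_2^n\updelta_3^{sc}$ to be unique as well.

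For the infinite-index case, I would split on whether $H\cap Z$ is trivial. If $H\cap Z=\{1\}$, then $H$ embeds into $\bar H\subset\ZZ^2$ and is therefore abelian. Otherwise $H\cap Z$ is a nontrivial subgroup of $Z\cong\ZZ$, hence of finite index in $Z$; by the index formula $[\ZZ^2:\bar H]$ must be infinite, so $\bar H$ has rank at most one, hence is cyclic. Then $H$ is generated by $H\cap Z$ together with a single lift of a generator of $\bar H$, and these two generators commute because $H\cap Z\subset\z(\HHH(r))$, so $H$ is abelian.

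The one step that is more than bookkeeping is the injectivity in the finite-index case, i.e.\ verifying that $H$ carries no relations beyond the Heisenberg ones. I expect this to be the main (minor) obstacle, and I plan to handle it via the normal-form uniqueness argument sketched above, whose only substantive input is the nonvanishing of $a_1b_2-a_2b_1$.
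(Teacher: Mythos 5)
Your proposal is correct and follows essentially the same route as the paper: both arguments analyze a subgroup via its intersection with the center $\langle\updelta_3\rangle$ and its image in $\bar{\HHH}(r)\cong\ZZ^2$, split the infinite-index case according to whether that intersection is trivial, and in the finite-index case lift a basis of the image lattice and compute the commutator of the lifts to identify the parameter $r'$. The only difference is that you spell out the injectivity of the resulting surjection $\HHH(r')\twoheadrightarrow H$ via normal forms, a verification the paper leaves implicit.
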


\begin{proof}
Let $\Delta$ be a subgroup of $\HHH(r)$. Denote by $\Delta_{\z}$ the intersection $\Delta\cap\z(\HHH(r))$,
and let $\bar{\Delta}\cong\Delta/\Delta_{\z}$ be the image of $\Delta$ in $\bar{\HHH}(r)$.

Suppose that $\Delta$ has finite index in $\HHH(r)$.
Then $\Delta_{\z}$ has finite index in $\z(\HHH(r))$, so that it is generated by $\updelta_3^k$ for some positive integer $k$.
Furthermore, $\bar{\Delta}$ has finite index in~\mbox{$\bar{\HHH}(r)\cong\ZZ^2$}, so that $\bar{\Delta}\cong\ZZ^2$.
Choose generators $\bar{\uptheta}_i$, $i=1,2$, of the group $\bar{\Delta}$,
and let~\mbox{$\uptheta_i$, $i=1,2$}, be their preimages in $\Delta$. Then $\Delta$ is generated by $\uptheta_1$, $\uptheta_2$,
and $\updelta_3^k$. One has~\mbox{$[\uptheta_1,\uptheta_2]=\updelta_3^d$} for some integer~$d$. In particular,
we see that $d$ is divisible by $k$. This implies that $\Delta\cong\HHH(r')$ for $r'=\frac{|d|}{k}$.

Now suppose that $\Delta$ has infinite index in $\HHH(r)$. If $\Delta_{\z}$ is trivial, then $\Delta\cong\bar{\Delta}$
is abelian. So we may assume that $\Delta_{\z}$ is not trivial. This means that $\Delta_{\z}$ has finite
index in~\mbox{$\z(\HHH(r))\cong\ZZ$}, and $\bar{\Delta}$ has infinite index in $\bar{\HHH}(r)$.
The latter implies that $\bar{\Delta}\cong\ZZ$. Thus~$\Delta$ is a central extension of a cyclic group,
so it is abelian.
\end{proof}

Note that a subgroup in $\HHH(r)$ generated by $\updelta_1^a$, $\updelta_2$, and $\updelta_3^c$
is isomorphic to $\HHH\left(\frac{ar}{c}\right)$.
Hence any group $\HHH(r')$ is realized as a subgroup of a given group $\HHH(r)$.
We will be interested in properties of normal subgroups of $\HHH(r)$.

\begin{lemma}\label{lemma:Heisenberg-subgroups}
Let $\Gamma_0\subset\HHH(r)$ be a normal subgroup of finite index, and put~\mbox{$G_0=\HHH(r)/\Gamma_0$}.
Then there are integers $a_1,a_2,a_3,b_1,b_2,b_3$
with~\mbox{$a_1b_2-a_2b_1\neq 0$}, and~\mbox{$c>0$} such that
$\updelta_3^c$ generates the intersection of $\Gamma_0$ with $\z(\HHH(r))$, and
$$
\Gamma_0=\langle\updelta_1^{a_1}\updelta_2^{a_2}\updelta_3^{a_3},\
\updelta_1^{b_1}\updelta_2^{b_2}\updelta_3^{b_3},\ \updelta_3^c \rangle.
$$
Moreover, the number $c$ divides $r\gcd(a_1,a_2,b_1,b_2)$, one has
$$
\Gamma_0\cong\HHH\left(\frac{r|a_1b_2-a_2b_1|}{c}\right),
$$
and the group $G_0$ contains a normal abelian subgroup of index at most $\gcd(a_1,b_1)$.
\end{lemma}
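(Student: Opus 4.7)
The plan is to follow the strategy used in the proof of Lemma~\ref{lemma:Heisenberg-finite-index}: first compute $\Delta_{\z}=\Gamma_0\cap\z(\HHH(r))$, then pick representatives in $\Gamma_0$ of generators of its image in $\bar{\HHH}(r)$. Since $\Gamma_0$ has finite index in $\HHH(r)$, the intersection $\Delta_{\z}$ has finite index in $\z(\HHH(r))=\langle\updelta_3\rangle\cong\ZZ$, so it is generated by $\updelta_3^c$ for a unique positive integer $c$. Likewise the image $\bar{\Gamma}_0\subset\bar{\HHH}(r)\cong\ZZ^2$ has finite index and is therefore free abelian of rank $2$; lift a pair of its generators to $\uptheta_1=\updelta_1^{a_1}\updelta_2^{a_2}\updelta_3^{a_3}$ and $\uptheta_2=\updelta_1^{b_1}\updelta_2^{b_2}\updelta_3^{b_3}$ in $\Gamma_0$. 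The determinant $a_1b_2-a_2b_1$ equals $\pm[\ZZ^2:\bar{\Gamma}_0]$ and is therefore non-zero. Using the short exact sequence $1\to\Delta_{\z}\to\Gamma_0\to\bar{\Gamma}_0\to 1$, every element of $\Gamma_0$ is of the form $\uptheta_1^a\uptheta_2^b(\updelta_3^c)^e$, so $\Gamma_0=\langle\uptheta_1,\uptheta_2,\updelta_3^c\rangle$.

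Next I would record, once and for all, the commutator formula in $\HHH(r)$: for any $u=\updelta_1^{u_1}\updelta_2^{u_2}\updelta_3^{u_3}$ and $v=\updelta_1^{v_1}\updelta_2^{v_2}\updelta_3^{v_3}$ one has $[u,v]=\updelta_3^{r(u_1v_2-u_2v_1)}$, which follows from the defining relations and the centrality of $\updelta_3$ (or from the $3\times 3$ matrix model). Applying it to the four commutators $[\updelta_i,\uptheta_j]$, $i,j\in\{1,2\}$, and using normality of $\Gamma_0$ in $\HHH(r)$ to put each one in $\Delta_{\z}=\langle\updelta_3^c\rangle$, I get $c\mid ra_1,ra_2,rb_1,rb_2$, hence $c\mid r\gcd(a_1,a_2,b_1,b_2)$. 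Applied to the pair $(\uptheta_1,\uptheta_2)$ the same formula gives
\[
[\uptheta_1,\uptheta_2]=\updelta_3^{r(a_1b_2-a_2b_1)}=(\updelta_3^c)^{r(a_1b_2-a_2b_1)/c}.
\]
Combined with centrality of $\updelta_3^c$ inside $\Gamma_0$, this is exactly the defining presentation of a Heisenberg group on the generators $\uptheta_1,\uptheta_2,\updelta_3^c$ (possibly after swapping $\uptheta_1$ and $\uptheta_2$ if $a_1b_2-a_2b_1<0$); the surjection from the abstract group $\HHH\!\bigl(r|a_1b_2-a_2b_1|/c\bigr)$ onto $\Gamma_0$ must be an isomorphism because $\HHH(r)$ is torsion-free (alternatively, Lemma~\ref{lemma:Heisenberg-finite-index} already gives $\Gamma_0\cong\HHH(r')$, and the Heisenberg invariant $r'=[\z(\Gamma_0):[\Gamma_0,\Gamma_0]]$ is read off directly from the two computations above).

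For the last assertion I would exhibit a normal abelian subgroup inside $\HHH(r)$ itself: take $A=\langle\updelta_2,\updelta_3\rangle$. Since $\updelta_3$ is central this is abelian; since $\updelta_1\updelta_2\updelta_1^{-1}=\updelta_2\updelta_3^r\in A$ it is normal in $\HHH(r)$, and the quotient $\HHH(r)/A\cong\ZZ$ is cyclic, generated by the image of $\updelta_1$. Then $N:=A\Gamma_0/\Gamma_0\subset G_0$ is a normal abelian subgroup, and $[G_0:N]$ equals the index in $\ZZ\cong\HHH(r)/A$ of the image of $\Gamma_0$. That image is generated by the $\updelta_1$-components of our three generators, which are $a_1$, $b_1$ and $0$; hence it equals $\gcd(a_1,b_1)\,\ZZ$ and $[G_0:N]=\gcd(a_1,b_1)$.

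No step is genuinely difficult. The only places where a little care is required are: (a) keeping track of signs when $a_1b_2-a_2b_1<0$ in identifying the Heisenberg parameter, and (b) justifying that the obvious presentation of $\Gamma_0$ is really a Heisenberg presentation with no hidden relations, which is handled by either the torsion-freeness of $\HHH(r)$ or by invoking Lemma~\ref{lemma:Heisenberg-finite-index} and matching the invariant $r'$ via the commutator computation.
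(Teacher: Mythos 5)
Your proposal is correct and follows essentially the same route as the paper's proof: lift generators of the image in $\bar{\HHH}(r)\cong\ZZ^2$, use normality and the commutator formula $[u,v]=\updelta_3^{r(u_1v_2-u_2v_1)}$ to get the divisibility and the Heisenberg parameter, and bound $[G_0:\breve F]$ via the normal abelian subgroup $\langle\updelta_2,\updelta_3\rangle$ of $\HHH(r)$. The only difference is that you spell out a couple of justifications (no hidden relations, torsion-freeness) that the paper leaves implicit.
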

\begin{proof}
Since $G_0$ is finite, the image $\bar{\Gamma}_0$ of $\Gamma_0$ in $\bar{\HHH}(r)$
is isomorphic to $\ZZ^2$. Choose the vectors $(a_1,a_2)$ and $(b_1,b_2)$ in $\bar{\HHH}(r)\cong\ZZ^2$
generating $\bar{\Gamma}_0$. The group $\Gamma_0$ contains the elements
$\upzeta=\updelta_1^{a_1}\updelta_2^{a_2}\updelta_3^{a_3}$ and $\upxi=\updelta_1^{b_1}\updelta_2^{b_2}\updelta_3^{b_3}$
for some integers $a_3$ and $b_3$.
The subgroup of~$\Gamma_0$ generated by $\upzeta$ and $\upxi$ maps surjectively to $\bar{\Gamma}_0$.
Hence $\Gamma_0$ is generated by $\upzeta$, $\upxi$, and the intersection $\Gamma_0\cap\z(\HHH(r))$.
The latter is a subgroup of $\z(\HHH(r))\cong\ZZ$, and thus is generated by some element of the form $\updelta_3^c$.

Since the subgroup $\Gamma_0$ is normal, we have $\updelta_3^{rb_1}=[\updelta_1,\upzeta]\in\Gamma_0$,
so that $c$ divides $ra_2$. Similarly, we see that $c$ divides $ra_1$, $rb_1$, and $rb_2$, and thus
also divides $r\gcd(a_1,a_2,b_1,b_2)$.

It is easy to compute that $[\upzeta,\upxi]=\updelta_3^{r(a_1b_2-a_2b_1)}$.
Therefore, one has
$$
\Gamma_0\cong\HHH\left(\frac{r|a_1b_2-a_2b_1|}{c}\right).
$$

Let $F$ be the subgroup of $\HHH(r)$ generated by the elements $\updelta_2$ and $\updelta_3$,
and $\breve{F}$ be its image in $G_0$. The subgroup $F$ is a normal abelian subgroup of $\HHH(r)$,
hence $\breve{F}$ is a normal abelian subgroup of $G_0$. Let $f\colon\HHH(r)\to G_0/\breve{F}$
be the natural projection. Then the group~\mbox{$G_0/\breve{F}$} is generated by~$f(\updelta_1)$,
and one has
$$
f(\updelta_1^{a_1})=f(\upzeta)=1=f(\upxi)=f(\updelta_1^{b_1}).
$$
Hence $[G_0:\breve{F}]=|G_0/\breve{F}|$ is bounded from above by (and actually equals)
the number~\mbox{$\gcd(a_1,b_1)$}.
\end{proof}

An immediate consequence of Lemma~\ref{lemma:Heisenberg-subgroups} is the following boundedness result.

\begin{corollary}\label{corollary:Heisenberg-quotients}
Let $\Gamma_0\subset\hat{\Gamma}_0$ be a normal subgroup,
where $\Gamma_0\cong\HHH(r_1)$ and $\hat{\Gamma}_0\cong\HHH(r_2)$.
Then the quotient group $G_0=\hat{\Gamma}_0/\Gamma_0$ is finite,
and $G_0$ contains a normal abelian subgroup of index at most~$r_1$.
\end{corollary}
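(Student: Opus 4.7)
\smallskip
\noindent\textbf{Proof proposal.}
The plan is to deduce the corollary directly from Lemma~\ref{lemma:Heisenberg-subgroups}, once we verify that the hypothesis of that lemma (finite index of $\Gamma_0$ in $\hat{\Gamma}_0$) is automatic. Identify $\hat{\Gamma}_0\cong\HHH(r_2)$ with the group generated by $\updelta_1,\updelta_2,\updelta_3$ as in~\eqref{eq:Heisenberg-generators}. Since $\Gamma_0\cong\HHH(r_1)$ is non-abelian (its commutator subgroup is generated by a non-trivial power of the central element $\updelta_3$), Lemma~\ref{lemma:Heisenberg-finite-index} applied inside $\hat{\Gamma}_0$ forces $\Gamma_0$ to have finite index in~$\hat{\Gamma}_0$. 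In particular, $G_0=\hat{\Gamma}_0/\Gamma_0$ is finite.

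Now I would apply Lemma~\ref{lemma:Heisenberg-subgroups} to the normal finite-index subgroup $\Gamma_0\subset\hat{\Gamma}_0$ with $r=r_2$. This yields integers $a_1,a_2,a_3,b_1,b_2,b_3$ with $a_1b_2-a_2b_1\neq 0$ and $c>0$ such that $\Gamma_0$ is generated by $\updelta_1^{a_1}\updelta_2^{a_2}\updelta_3^{a_3}$, $\updelta_1^{b_1}\updelta_2^{b_2}\updelta_3^{b_3}$, $\updelta_3^c$, with $c\mid r_2\gcd(a_1,a_2,b_1,b_2)$, with the isomorphism identity
\[
r_1=\frac{r_2|a_1b_2-a_2b_1|}{c},
\]
and with $G_0$ containing a normal abelian subgroup of index at most $\gcd(a_1,b_1)$. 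Thus the corollary will follow once I show $\gcd(a_1,b_1)\le r_1$.

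The main (indeed, only) remaining obstacle is this arithmetic comparison. Set $d=\gcd(a_1,b_1)$ and write $a_1=d\alpha_1$, $b_1=d\beta_1$ with $\gcd(\alpha_1,\beta_1)=1$. Then $d$ divides $a_1b_2-a_2b_1$, and since $a_1b_2-a_2b_1\ne 0$ we may write $|a_1b_2-a_2b_1|=dm$ for a positive integer $m$. Substituting into the formula for $r_1$ reduces the desired inequality $d\le r_1$ to $c\le r_2m$. To see the latter, let $f=\gcd(d,\gcd(a_2,b_2))$; a short check using $\gcd(d\alpha_1,d\beta_1)=d$ identifies $\gcd(a_1,a_2,b_1,b_2)$ with $f$, so $c\le r_2f$. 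On the other hand, $f$ divides both $a_2$ and $b_2$, hence divides $\alpha_1 b_2-a_2\beta_1=\pm m$, so $f\le m$. Combining, $c\le r_2f\le r_2m$, which is what we needed. The step I expect to be least obvious is the identification of the correct divisibility chain $f\mid m$; everything else is a direct invocation of the previous lemmas.
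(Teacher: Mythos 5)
Your proof is correct and follows essentially the same route as the paper: both invoke Lemma~\ref{lemma:Heisenberg-subgroups} and then reduce to the arithmetic inequality $\gcd(a_1,b_1)\le r_1$ via the divisibility $c\mid r_2\gcd(a_1,a_2,b_1,b_2)$ and the fact that $\gcd(a_1,a_2,b_1,b_2)$ divides $|a_1b_2-a_2b_1|/\gcd(a_1,b_1)$ --- the paper just packages this as a single chain of inequalities rather than factoring out $d$ first. Your explicit justification of finiteness via Lemma~\ref{lemma:Heisenberg-finite-index} fills in what the paper dismisses as obvious.
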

\begin{proof}
The group $G_0$ is finite for obvious reasons.
By Lemma~\ref{lemma:Heisenberg-subgroups}
there are integers $a_1,a_2,b_1,b_2$ with $a_1b_2-a_2b_1\neq 0$, and $c>0$ such that
$c$ divides $r\gcd(a_1,a_2,b_1,b_2)$, one has
$$
r_1=\frac{r_2|a_1b_2-a_2b_1|}{c},
$$
and $G_0$ contains a normal abelian subgroup of index at most
$\gcd(a_1,b_1)$. On the other hand, one has
$$
\frac{r_2|a_1b_2-a_2b_1|}{c}\ge \frac{r_2\gcd(a_1,b_1)\gcd(a_2,b_2)}{c}\ge \frac{r_2\gcd(a_1,a_2,b_1,b_2)}{c}\gcd(a_1,b_1)\ge \gcd(a_1,b_1).
$$
\end{proof}

\section{Heisenberg groups and semi-direct products}
\label{section:Heisenberg-semi-direct}

Consider the groups~\mbox{$\Gamma_0\cong \HHH(r)$} and $\Gamma_1\cong \ZZ$.
Let $\upgamma$ be a generator of $\Gamma_1$.
Consider a semi-direct product $\Gamma=\Gamma_0 \rtimes \Gamma_1$.
The action of $\upgamma$ on $\Gamma_0$ gives rise to
its action on
\[
\bar{\Gamma}_0=\Gamma_0/\z(\Gamma_0)\cong\ZZ^2,
\]
which is given by a matrix $M\in\GL_2(\ZZ)$ if we fix a basis in
$\bar{\Gamma}_0$ (cf.~\cite{Osipov} for a detailed description of the automorphism
group of the discrete Heisenberg group).

\begin{lemma}\label{lemma:Heisenberg-semi-direct-center}
The following assertions hold.
\begin{enumerate}
\item
One has $\upgamma\updelta_3\upgamma^{-1}=\updelta_3^{\det M}$.

\item The center $\z(\Gamma)$ is trivial if and only if $\det M=-1$.

\item One has $[\Gamma,\Gamma]\subset\Gamma_0$.
\end{enumerate}
\end{lemma}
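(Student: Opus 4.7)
Assertion (iii) is immediate, since the quotient $\Gamma/\Gamma_0 \cong \Gamma_1 \cong \ZZ$ is abelian and therefore $[\Gamma,\Gamma]$ lies in the kernel $\Gamma_0$ of the projection $\Gamma \to \Gamma/\Gamma_0$.

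For assertion (i), denote by $\phi$ the automorphism $x \mapsto \upgamma x \upgamma^{-1}$ of $\Gamma_0 = \HHH(r)$. Since $\phi$ preserves the centre $\z(\Gamma_0) = \langle \updelta_3\rangle \cong \ZZ$, one has $\phi(\updelta_3) = \updelta_3^{\pm 1}$. To identify the sign with $\det M$, I use the standard fact that in a nilpotent group of class two a central-valued commutator descends to an alternating bilinear pairing on the abelianisation. Applied to $\HHH(r)$, this yields $\omega \colon \bar\Gamma_0 \times \bar\Gamma_0 \to \z(\Gamma_0)$ with $\omega(\bar\updelta_1,\bar\updelta_2) = [\updelta_1,\updelta_2] = \updelta_3^r$, so that $\omega$ is $r$ times the standard determinant pairing on $\ZZ^2$, namely $\omega((a,c),(b,d)) = \updelta_3^{r(ad-bc)}$. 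Applying $\phi$ to $\updelta_3^r = [\updelta_1,\updelta_2]$ and using that $\phi$ acts on $\bar\Gamma_0$ by $M$ now gives
\[
\phi(\updelta_3^r) = [\phi(\updelta_1),\phi(\updelta_2)] = \omega(M\bar\updelta_1,M\bar\updelta_2) = \updelta_3^{r\det M},
\]
and since $\updelta_3$ has infinite order we conclude $\phi(\updelta_3) = \updelta_3^{\det M}$.

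For assertion (ii), the implication $\det M = 1 \Rightarrow \z(\Gamma) \neq \{1\}$ is immediate from (i): the element $\updelta_3$ is central in $\Gamma_0$ by construction and commutes with $\upgamma$ by (i), so it lies in $\z(\Gamma)$. For the converse, suppose $\det M = -1$ and pick any $g = \updelta\upgamma^k \in \z(\Gamma)$ with $\updelta \in \Gamma_0$. Commuting $g$ with $\upgamma$ forces $\phi(\updelta) = \updelta$; commuting $g$ with every $x \in \Gamma_0$ gives $\phi^k(x) = \updelta^{-1} x \updelta$, so that $\phi^k$ coincides on $\Gamma_0$ with an inner automorphism. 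Evaluating at $x = \updelta_3$ and invoking (i) yields $(\det M)^k = (-1)^k = 1$, so $k$ must be even; projecting to $\bar\Gamma_0$, where inner automorphisms act trivially, gives $M^k = I$.

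The main obstacle is the final step of forcing $k = 0$ from $M^k = I$. When $M$ has infinite order in $\GL_2(\ZZ)$---as will be the case in the paper's applications---this is automatic; then $g = \updelta$ lies in $\z(\Gamma_0) = \langle \updelta_3\rangle$ and is $\phi$-fixed, whence, using (i) and $\det M = -1$, one gets $\updelta = 1$. Without such an input one has to argue separately that the resulting system for $\updelta$ has no nontrivial solution; in particular, the lemma naturally reads under the implicit assumption that $M$ is of infinite order, since for $M$ of finite order with $\det M = -1$ central elements with $k \neq 0$ can genuinely arise.
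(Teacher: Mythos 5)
Your proof is correct, and for assertions (i) and (iii) it follows essentially the same route as the paper: the paper also determines the exponent $t$ in $\upgamma\updelta_3\upgamma^{-1}=\updelta_3^{t}$ by conjugating the relation $\updelta_3^{r}=[\updelta_1,\updelta_2]$ and expanding the commutator of the images of $\updelta_1,\updelta_2$, which is exactly your pairing computation written out by hand. The real divergence is in (ii), which the paper dismisses with ``easily follows from assertion~(i)''; you supply an actual argument, and in doing so you correctly detect that the implication $\det M=-1\Rightarrow\z(\Gamma)=\{1\}$ is \emph{not} unconditional. Your caveat is genuine: take $r=1$ and let $\upgamma$ act by $\updelta_1\mapsto\updelta_1$, $\updelta_2\mapsto\updelta_2^{-1}$, $\updelta_3\mapsto\updelta_3^{-1}$ (so $M=\mathrm{diag}(1,-1)$, $\det M=-1$); this is an order-two automorphism of $\HHH(1)$, so $\upgamma^{2}$ is a nontrivial central element of $\Gamma$. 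Thus assertion~(ii) as literally stated needs the hypothesis, implicit in all of the paper's applications, that $M$ has infinite order. Since for $\det M=-1$ the characteristic polynomial is $x^{2}-(\operatorname{tr}M)x-1$, this is equivalent to $\operatorname{tr}M\neq0$ (equivalently, to $M$ having no eigenvalue equal to $1$), and it holds for the matrices coming from Inoue surfaces of type $\mathrm{S^{(\pm)}}$, whose eigenvalues satisfy $\upalpha>1$. Under that hypothesis your chain (centrality forces $\phi(\updelta)=\updelta$ and $\phi^{k}=\mathrm{inn}(\updelta^{-1})$, whence $(-1)^{k}=1$ and $M^{k}=I$, hence $k=0$, hence $\updelta\in\langle\updelta_3\rangle$ is killed by $\phi(\updelta_3)=\updelta_3^{-1}$) is complete, and is strictly more informative than what the paper records.
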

\begin{proof}
For $i=1,2$ one has
$$
\upgamma\updelta_i\upgamma^{-1}=\updelta_1^{m_{1,i}}\updelta_2^{m_{2,i}}\updelta_{3}^{p_{i}},
$$
where $M=(m_{i,j})$, and $p_i$ are some integers. Obviously, we have $\upgamma\updelta_3\upgamma^{-1}=\updelta_3^t$
for some integer $t$. Therefore
\begin{multline*}
\updelta_3^{rt}=\upgamma\updelta_3^r\upgamma^{-1}=
\upgamma\updelta_1\updelta_2\updelta_1^{-1}\updelta_2^{-1}\upgamma^{-1}=
\updelta_1^{m_{1,1}}\updelta_2^{m_{2,1}}\updelta_1^{m_{1,2}}\updelta_2^{m_{2,2}}
\updelta_2^{-m_{2,1}}\updelta_1^{-m_{1,1}}\updelta_2^{-m_{2,2}}\updelta_1^{-m_{1,2}}=\\ =
\updelta_3^{r(m_{1,1}m_{2,2}-m_{1,2}m_{2,1})}=\updelta_3^{r\det M},
\end{multline*}
which implies assertion~(i).
Assertion~(ii) easily follows from assertion~(i). Assertion~(iii) is obvious.
\end{proof}

We will need the following
notation. Let $\Upsilon$ be a group, and $\Delta$ be its subset.
Denote
$$
\rad(\Delta,\Upsilon)=\{g\in\Upsilon\mid g^k\in\Delta
\text{\ for some positive integer $k$\ }\}.
$$
If $\Delta$ is invariant with respect to
some automorphism of $\Upsilon$, then $\rad(\Delta,\Upsilon)$ is invariant
with respect to this automorphism as well.
If a group $\Upsilon$ has no torsion and $\Delta\subset\Delta'$ is a pair of
subgroups in $\Upsilon$ such that the index $[\Delta':\Delta]$ is finite,
then $\Delta'\subset\rad(\Delta,\Upsilon)$.

\begin{lemma}\label{lemma:Heisenberg-exercise}
Suppose that the matrix $M$ does not have eigen-values equal to~$1$. Then the following assertions hold:
\begin{enumerate}
\item one has
$$
[\Gamma,\Gamma]/\z([\Gamma,\Gamma])=\operatorname{Im}(M-\mathrm{Id})\subset\bar{\Gamma}_0
$$
is a free abelian group of rank $2$;

\item
$[\Gamma,\Gamma]\cong\HHH(r')$ for some $r'$, and
$[\Gamma,\Gamma]$ is a subgroup of finite index in $\Gamma_0$;

\item
$\Gamma_0=\rad([\Gamma,\Gamma],\Gamma)$; in particular, $\Gamma_0$
is a characteristic subgroup of $\Gamma$.
\end{enumerate}
\end{lemma}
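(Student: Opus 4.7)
The plan is to first identify the image of $N = [\Gamma,\Gamma]$ in $\bar{\Gamma}_0$, then apply Lemma~\ref{lemma:Heisenberg-finite-index} to describe $N$ as an abstract group, and finally deduce the radical statement via a projection argument.

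By Lemma~\ref{lemma:Heisenberg-semi-direct-center}(iii) we have $N \subset \Gamma_0$, so $N$ sits inside $\HHH(r)$ and its image $\bar{N}$ in $\bar{\Gamma}_0 = \Gamma_0/\z(\Gamma_0)$ makes sense. Since $\bar{\Gamma}_0$ is abelian, $\bar{N}$ is generated by the images of commutators of the generators $\upgamma,\updelta_1,\updelta_2,\updelta_3$ of $\Gamma$. Among these, $[\updelta_i,\updelta_j]$ all lie in $\z(\Gamma_0)$ by the Heisenberg relations, and $[\upgamma,\updelta_3] = \updelta_3^{\det M - 1}$ also lies there by Lemma~\ref{lemma:Heisenberg-semi-direct-center}(i); all these project to zero. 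The remaining commutators $[\upgamma,\updelta_i]$ for $i=1,2$ project to $(M - \mathrm{Id})(\bar{\updelta}_i)$. Hence $\bar{N} = \operatorname{Im}(M - \mathrm{Id})$.

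Since $M$ has no eigen-value $1$, the endomorphism $M - \mathrm{Id}$ is injective on $\bar{\Gamma}_0 \cong \ZZ^2$, so $\bar{N}$ is a free abelian group of rank $2$ and a finite-index sublattice of $\bar{\Gamma}_0$. Also $N$ contains $[\updelta_1,\updelta_2] = \updelta_3^r$, so $N \cap \z(\Gamma_0)$ has finite index in $\z(\Gamma_0) \cong \ZZ$; combining these two facts shows $N$ has finite index in $\Gamma_0$. Lemma~\ref{lemma:Heisenberg-finite-index} then yields $N \cong \HHH(r')$ for some positive integer $r'$, which is assertion~(ii). For (i), the content is the identification $\z(N) = N \cap \z(\Gamma_0)$; this is the technical heart of the argument, and I expect it to be the main obstacle. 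It follows by noting that for any $x \in N \setminus \z(\Gamma_0)$, the full-rank sublattice $\bar{N} \subset \ZZ^2$ contains some $\bar{y}$ not proportional to $\bar{x}$, and lifting to $y \in N$ gives $[x,y] = \updelta_3^{r(a_xb_y - b_xa_y)} \neq 1$, where $\bar{x} = (a_x, b_x)$ and $\bar{y} = (a_y, b_y)$. Hence $N/\z(N) = N/(N \cap \z(\Gamma_0)) \cong \bar{N} = \operatorname{Im}(M - \mathrm{Id})$, which proves (i).

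For (iii), the inclusion $\Gamma_0 \subset \rad(N, \Gamma)$ follows from (ii) together with the remark preceding the lemma, since $\Gamma_0$ is torsion-free and $[\Gamma_0:N]$ is finite. Conversely, if $g \in \Gamma$ projects nontrivially to the torsion-free quotient $\Gamma/\Gamma_0 \cong \Gamma_1$, then so does every positive power of $g$, which therefore lies outside $\Gamma_0 \supset N$; hence $\rad(N, \Gamma) \subset \Gamma_0$. Finally, $N = [\Gamma,\Gamma]$ is characteristic in $\Gamma$, and radicals inherit invariance of the subset being radicalized, so $\Gamma_0 = \rad(N, \Gamma)$ is a characteristic subgroup of $\Gamma$.
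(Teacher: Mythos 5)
Your proof is correct and follows essentially the same route as the paper's: identify the image of $[\Gamma,\Gamma]$ in $\bar{\Gamma}_0$ with $\operatorname{Im}(M-\mathrm{Id})$, deduce finite index in $\Gamma_0$ and hence $[\Gamma,\Gamma]\cong\HHH(r')$ via Lemma~\ref{lemma:Heisenberg-finite-index}, and obtain (iii) from the torsion-freeness of $\Gamma/\Gamma_0\cong\ZZ$. The only difference is that you explicitly verify $\z([\Gamma,\Gamma])=[\Gamma,\Gamma]\cap\z(\Gamma_0)$ (via the commutator formula $[x,y]=\updelta_3^{r(a_xb_y-b_xa_y)}$) and use $[\updelta_1,\updelta_2]=\updelta_3^{r}$ to see that $[\Gamma,\Gamma]\cap\z(\Gamma_0)$ has finite index, two small points the paper leaves implicit.
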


\begin{proof}
The subgroup $[\Gamma,\Gamma]/\z([\Gamma,\Gamma])\subset\bar{\Gamma}_0$ is generated by the images of the
commutators~\mbox{$[\upgamma,\updelta_i]$}, where $i=1,2$.
Therefore, it can be identified with the sublattice in $\Gamma_0$ that is the image of
the operator $M-\mathrm{Id}$. Since the latter operator is non-degenerate
by assumption, we conclude that $[\Gamma,\Gamma]/\z([\Gamma,\Gamma])$ is a free abelian group of rank $2$.
This proves assertion~(i).

Since $[\Gamma,\Gamma]/\z([\Gamma,\Gamma])$ has finite index in $\bar{\Gamma}_0$ by assertion~(i), and $[\Gamma,\Gamma]$
contains the element $\updelta_3^{\det M}$ by Lemma~\ref{lemma:Heisenberg-semi-direct-center}, we conclude that
$[\Gamma,\Gamma]$ is a subgroup of finite index in $\Gamma_0$. Therefore, it is isomorphic to a group
$\HHH(r')$ for some positive integer $r'$ by Lemma~\ref{lemma:Heisenberg-finite-index}. This gives assertion~(ii).

Since $[\Gamma,\Gamma]$ has finite index in $\Gamma_0$, we see that $\Gamma_0\subset\rad([\Gamma,\Gamma],\Gamma)$.
Since $\Gamma/\Gamma_0\cong\ZZ$ has no torsion, we see that
the opposite inclusion holds as well. This gives assertion~(iii).
\end{proof}

\begin{remark}\label{remark:Heisenberg-semi-direct-no-1}
If the matrix $M$ does not have eigen-values equal to~$1$,
then the group~$\Gamma$ does not contain subgroups
isomorphic to~$\ZZ^3$.
Indeed, suppose that $\Delta\subset\Gamma$ is such a subgroup. Then both the intersection
$\Delta_{\z}=\Delta\cap\z(\Gamma)$ and the image $\bar{\Delta}_0$
of $\Delta\cap\Gamma_0$ in $\bar{\Gamma}_0$ are non-trivial. Now the contradiction is obtained by taking a commutator
of a non-trivial element of $\Delta_{\z}$ with a preimage in $\Delta$ of a non-trivial element
of $\bar{\Delta}_0$ and using the fact that the operator $M-\mathrm{Id}$ is non-degenerate.
\end{remark}

\begin{lemma}\label{lemma:Heisenberg-semi-direct}
Suppose that the eigen-values of the matrix $M$ are not roots of unity. Then
there exists a constant $\nu=\nu(\Gamma)$ with the following property.
Let~\mbox{$\hat{\Gamma}=\hat{\Gamma}_0\rtimes\hat{\Gamma}_1$}, where~\mbox{$\hat{\Gamma}_0\cong\HHH(\hat{r})$}
and $\hat{\Gamma}_1\cong\ZZ$,
and suppose that $\Gamma\subset\hat{\Gamma}$ is a normal subgroup.
Then the group $G=\hat{\Gamma}/\Gamma$ is finite and has a normal abelian subgroup
of index at most~$\nu$.
\end{lemma}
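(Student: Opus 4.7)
The strategy mirrors that of Lemma~\ref{lemma:lattices-semi-direct}. First, the quotient $G=\hat{\Gamma}/\Gamma$ is finite for obvious reasons (one may compare Hirsch lengths of the polycyclic groups $\Gamma\subset\hat{\Gamma}$). The heart of the argument is to show that $\Gamma_0=\Gamma\cap\hat{\Gamma}_0$; once this is established, the remaining steps are a careful assembly of the preceding lemmas together with a bound on the exponent arising from Lemma~\ref{lemma:Inoue-normal-subgroup}.

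To prove $\Gamma_0=\Gamma\cap\hat{\Gamma}_0$, I would argue that $\Gamma_0$ is the Fitting subgroup of $\Gamma$. The hypothesis on $M$ forces this: any nilpotent normal subgroup $N$ strictly containing $\Gamma_0$ would be of the form $\Gamma_0\cdot\langle\upgamma^m\rangle$ for some $m\ge 1$, but nilpotence of such a semi-direct product requires $M^m$ to act unipotently on $\bar{\Gamma}_0$, contradicting the assumption that the eigen-values of $M$ are not roots of unity. By Fitting's theorem, every nilpotent normal subgroup of $\Gamma$ therefore lies inside $\Gamma_0$. Now $\Gamma_0^{\ast}:=\Gamma\cap\hat{\Gamma}_0$ is normal in $\hat{\Gamma}$ as an intersection of normal subgroups, a fortiori normal in $\Gamma$, and is nilpotent as a subgroup of the Heisenberg group $\hat{\Gamma}_0\cong\HHH(\hat{r})$; hence $\Gamma_0^{\ast}\subset\Gamma_0$. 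Since $\hat{\Gamma}_0/\Gamma_0^{\ast}$ embeds into the finite group $G$, Lemma~\ref{lemma:Heisenberg-finite-index} ensures that $\Gamma_0^{\ast}$ is itself a Heisenberg group, in particular of the same Hirsch length as $\Gamma_0$. Consequently $\Gamma_0/\Gamma_0^{\ast}$ is a finite subgroup of the torsion-free group $\hat{\Gamma}/\hat{\Gamma}_0\cong\ZZ$, which forces $\Gamma_0=\Gamma_0^{\ast}$.

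Having secured $\Gamma_0\subset\hat{\Gamma}_0$, I would then apply Lemma~\ref{lemma:Inoue-normal-subgroup} to the inclusion $\Gamma\subset\hat{\Gamma}=\hat{\Gamma}_0\rtimes\hat{\Gamma}_1$: this provides a positive integer $k$ and an element $\hat{\updelta}\in\hat{\Gamma}_0$ with $\Gamma=\Gamma_0\rtimes\langle\hat{\upgamma}^k\hat{\updelta}\rangle$, together with a normal subgroup $H\triangleleft G$ of index $k$ that is isomorphic to $\hat{\Gamma}_0/\Gamma_0$. Since $\Gamma_0\subset\hat{\Gamma}_0$ is a normal inclusion of Heisenberg groups, Corollary~\ref{corollary:Heisenberg-quotients} supplies a normal abelian subgroup of $H$ of index at most~$r$. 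Taking the normal core of this subgroup in $G$ yields a normal abelian subgroup of $G$ of index bounded in terms of $k$ and~$r$.

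It remains to bound~$k$. Because $\Gamma_0$ is characteristic in $\Gamma$ and hence normal in $\hat{\Gamma}$, conjugation by $\hat{\upgamma}$ preserves $\Gamma_0$ together with its centre $\z(\Gamma_0)$, inducing an element $R\in\GL(\bar{\Gamma}_0)\cong\GL_2(\ZZ)$. The element $\hat{\updelta}\in\hat{\Gamma}_0$ acts trivially on $\bar{\Gamma}_0$, since for any $g\in\Gamma_0\subset\hat{\Gamma}_0$ the commutator $[\hat{\updelta},g]$ lies in $[\hat{\Gamma}_0,\hat{\Gamma}_0]\cap\Gamma_0\subset\z(\hat{\Gamma}_0)\cap\Gamma_0=\z(\Gamma_0)$; thus $\hat{\upgamma}^k\hat{\updelta}$ acts on $\bar{\Gamma}_0$ as~$R^k$. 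On the other hand, $\hat{\upgamma}^k\hat{\updelta}$ and $\upgamma$ both project to generators of $\Gamma/\Gamma_0\cong\ZZ$, so (after replacing $R$ by $R^{-1}$ if necessary) they differ by an element of $\Gamma_0$, which acts trivially on $\bar{\Gamma}_0$ as an inner automorphism; their actions on $\bar{\Gamma}_0$ therefore coincide with~$M$. Consequently $R^k=M$ in $\GL_2(\ZZ)$, and Lemma~\ref{lemma:matrix-root} bounds $k$ by a constant depending only on~$\Gamma$. The principal obstacle in this plan is the Fitting-subgroup identification $\Gamma_0=\Gamma\cap\hat{\Gamma}_0$; once that is in hand, the remaining ingredients fit together essentially mechanically.
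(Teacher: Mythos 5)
Your proof is correct, and its overall architecture coincides with the paper's: finiteness of $G$, the identification $\Gamma_0=\Gamma\cap\hat{\Gamma}_0$, an application of Lemma~\ref{lemma:Inoue-normal-subgroup} followed by Corollary~\ref{corollary:Heisenberg-quotients}, and a bound on $k$ via Lemma~\ref{lemma:matrix-root}. The one place where you genuinely diverge is the key identification $\Gamma_0=\Gamma\cap\hat{\Gamma}_0$. The paper gets this at once from Lemma~\ref{lemma:Heisenberg-exercise}(iii), which exhibits $\Gamma_0$ as $\rad([\Gamma,\Gamma],\Gamma)$ --- an intrinsic description that makes $\Gamma_0$ characteristic and locates it inside $\hat{\Gamma}_0$ because $[\Gamma,\Gamma]\subset[\hat{\Gamma},\hat{\Gamma}]\subset\hat{\Gamma}_0$ and $\hat{\Gamma}/\hat{\Gamma}_0$ is torsion-free. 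You instead identify $\Gamma_0$ as the Fitting subgroup of $\Gamma$, using the hypothesis that the eigen-values of $M$ are not roots of unity to exclude nilpotence of any $\Gamma_0\cdot\langle\upgamma^m\rangle$, then deduce $\Gamma\cap\hat{\Gamma}_0\subset\Gamma_0$ from nilpotence of subgroups of $\HHH(\hat{r})$ and finish by a Hirsch-length and torsion-freeness comparison. Both arguments are sound; the paper's is shorter because Lemma~\ref{lemma:Heisenberg-exercise}(iii) is already available, while yours is self-contained, more conceptual, and would adapt more readily to higher-dimensional nilpotent $\Gamma_0$, where an explicit description of $\rad([\Gamma,\Gamma],\Gamma)$ is less convenient. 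You are also more explicit than the paper about the last assembly step (passing to the normal core of the abelian subgroup of $G_0\cong\hat{\Gamma}_0/\Gamma_0$ to make it normal in all of $G$), which the paper compresses into ``the assertion easily follows.'' One cosmetic imprecision: the restriction of $R$ to the sublattice $\Gamma_0/\z(\Gamma_0)$ gives a matrix whose $k$-th power is only \emph{conjugate} to $M$ (as the paper states), not literally equal to it in $\GL_2(\ZZ)$; since Lemma~\ref{lemma:matrix-root} only sees eigen-values, this changes nothing.
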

\begin{proof}
The group $G$ is finite for obvious reasons (cf. Lemma~\ref{lemma:Heisenberg-finite-index}).
By Lemma~\ref{lemma:Heisenberg-exercise}(iii) we have $\Gamma_0=\Gamma\cap\hat{\Gamma}_0$.
Thus by Lemma~\ref{lemma:Inoue-normal-subgroup}
there is a positive integer $k$ with the following properties: the subgroup $\Gamma_1\subset\Gamma$ is generated by an element $\hat{\upgamma}^k\hat{\updelta}$, where $\hat{\upgamma}$ is
a generator of $\hat{\Gamma}_1$, and $\hat{\updelta}$ is an element of $\hat{\Gamma}_0$;
and the group $G$ contains a normal subgroup $G_0\cong\hat{\Gamma}_0/\Gamma_0$
of index~$k$.
Note that the subgroup $\Gamma_0$ is normal in $\hat{\Gamma}$,
because $\Gamma$ is normal in $\hat{\Gamma}$,
while~$\Gamma_0$ is a characteristic subgroup of $\Gamma$ by Lemma~\ref{lemma:Heisenberg-exercise}(iii).
Let~\mbox{$R\in\GL_2(\ZZ)$} be the matrix that defines the semi-direct product~\mbox{$\hat{\Gamma}=\hat{\Gamma}_0\rtimes\hat{\Gamma}_1$}.
Considering the action of the element $\hat{\upgamma}$
on the lattice $\hat{\Gamma}_0/\z(\hat{\Gamma}_0)\cong\ZZ^2$ and its sublattice
$\Gamma_0/\z(\Gamma_0)\cong\ZZ^2$,
we see that $R^k$ is conjugate to $M$.
Thus $k$ is bounded by some constant that depends only on $M$
(that is, only on $\Gamma$) by Lemma~\ref{lemma:matrix-root}.
On the other hand, the group $G_0$ contains a normal abelian subgroup
of index at most $r$ by Corollary~\ref{corollary:Heisenberg-quotients},
and the assertion easily follows.
\end{proof}

\section{Heisenberg groups and direct products}
\label{section:Heisenberg-direct}

Consider the groups $\Gamma_0\cong \HHH(r)$ and~\mbox{$\Gamma_1\cong \ZZ$},
and put $\Gamma=\Gamma_0\times\Gamma_1$. One has
$$
\z(\Gamma)=\langle \updelta_3,\upgamma \rangle\cong\ZZ^2,
$$
and $\bar{\Gamma}=\Gamma/\z(\Gamma)\cong\ZZ^2$.

Unlike the situation in~\S\ref{section:lattices-semi-direct}
and~\S\ref{section:Heisenberg-semi-direct}, the subgroup
$\Gamma_0$ is not characteristic in $\Gamma$. Indeed,
let $\updelta_1$, $\updelta_2$, and $\updelta_3$ be the generators of
$\Gamma_0$ as in~\eqref{eq:Heisenberg-generators},
and $\upgamma$ be a generator of~$\Gamma_1$. Define an automorphism $\psi$ of $\Gamma$ by
$$
\psi(\updelta_1)=\updelta_1\upgamma, \
\psi(\updelta_2)=\updelta_2, \
\psi(\updelta_3)=\updelta_3, \
\psi(\upgamma)=\upgamma,
$$
cf.~\cite{Osipov}.
Then $\psi$ does not preserve $\Gamma_0$.
However, the following weaker uniqueness result holds.

\begin{lemma}\label{lemma:nearly-unique}
Let $\Gamma_0'\subset\Gamma$ be a normal subgroup isomorphic
to $\HHH(r')$ for some positive integer $r'$.
Suppose that $\varsigma\colon \Gamma/\Gamma_0'\cong\ZZ$. Then
the natural projection $\Gamma_0'\to\Gamma_0$ is an isomorphism.
In particular, one has $r=r'$.
\end{lemma}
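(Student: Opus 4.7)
The plan is to show that the restriction to $\Gamma_0'$ of the projection $p\colon\Gamma=\Gamma_0\times\Gamma_1\to\Gamma_0$ (with kernel $\Gamma_1$) is both injective and surjective; this is equivalent to checking $\Gamma_0'\cap\Gamma_1=\{1\}$ and $\Gamma=\Gamma_0'\cdot\Gamma_1$. Once the restriction is an isomorphism, $\HHH(r')\cong p(\Gamma_0')=\Gamma_0\cong\HHH(r)$ immediately forces $r=r'$ (comparing the torsion $\ZZ/r\ZZ$ in the abelianization).

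For the first step, suppose for contradiction that some nontrivial $\upgamma^k\in\Gamma_0'\cap\Gamma_1$. Since $\Gamma_1\subset\z(\Gamma)$, this element is central in $\Gamma_0'$, hence lies in $\z(\Gamma_0')\cong\ZZ$. Let $\eta\in\Gamma_0'$ generate $\z(\Gamma_0')$ and write it in the product decomposition as $\eta=\updelta_1^a\updelta_2^b\updelta_3^c\upgamma^d$. From $\eta^m=\upgamma^k$ for some $m\neq 0$, a coordinate-wise comparison in $\Gamma=\Gamma_0\times\Gamma_1$ forces $a=b=0$, so $\eta\in\z(\Gamma)$. Then $\z(\Gamma_0')=\langle\eta\rangle$ is a rank-one subgroup of $\z(\Gamma)\cong\ZZ^2$; but $[\Gamma_0',\Gamma_0']\subset\z(\Gamma_0')$ is nontrivial (non-abelianness of $\HHH(r')$) and contained in $[\Gamma,\Gamma]=\langle\updelta_3^r\rangle$, while simultaneously $\langle\eta\rangle\ni\upgamma^k$. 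So $\langle\eta\rangle$ would meet both of the independent rank-one subgroups $\langle\updelta_3\rangle$ and $\langle\upgamma\rangle$ of $\z(\Gamma)$ nontrivially, which is impossible. Hence $\Gamma_0'\cap\Gamma_1=\{1\}$.

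For surjectivity, the injectivity identifies $p(\Gamma_0')\cong\HHH(r')$ as a non-abelian subgroup of $\Gamma_0\cong\HHH(r)$, so Lemma~\ref{lemma:Heisenberg-finite-index} gives finite index in $\Gamma_0$. To force the index to be $1$, I would use $\varsigma$: the composition $\Gamma_1\hookrightarrow\Gamma\twoheadrightarrow\Gamma/\Gamma_0'$ is injective (by the previous step) and, by the compatibility implicit in the hypothesis on $\varsigma$, also surjective onto $\Gamma/\Gamma_0'\cong\ZZ$. This yields $\Gamma_0'\cdot\Gamma_1=\Gamma$, and therefore $p(\Gamma_0')=\Gamma_0$.

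The main obstacle I expect is the surjectivity step: the injectivity comes down to a simple rank count inside the centre of $\Gamma$, whereas surjectivity genuinely uses that $\Gamma/\Gamma_0'\cong\ZZ$ in a way compatible with the product structure. Abstractly, $\Gamma$ admits normal subgroups isomorphic to Heisenberg groups with infinite cyclic quotient whose projection to $\Gamma_0$ has proper finite index; the content of the lemma is that the compatibility information contained in $\varsigma$ rules these out by identifying $\Gamma/\Gamma_0'$ with $\Gamma_1$ via the natural map.
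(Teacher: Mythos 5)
Your injectivity step is correct: if some nontrivial $\upgamma^k$ lies in $\Gamma_0'\cap\Gamma_1$, then a generator $\eta$ of $\z(\Gamma_0')\cong\ZZ$ is forced (by projecting to $\bar{\Gamma}_0$) to lie in $\z(\Gamma)=\langle\updelta_3,\upgamma\rangle$, and the cyclic group $\langle\eta\rangle$ would meet both $\langle\upgamma\rangle$ (it contains $\upgamma^k$) and $\langle\updelta_3\rangle$ (it contains the nontrivial group $[\Gamma_0',\Gamma_0']\subset\langle\updelta_3^{r}\rangle$), which is impossible for a rank-one subgroup of $\z(\Gamma)\cong\ZZ^2$; so $\Gamma_0'\cap\Gamma_1=\{1\}$. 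The genuine gap is the surjectivity step. ``By the compatibility implicit in the hypothesis on $\varsigma$'' is not an argument: the hypothesis supplies only an abstract isomorphism $\Gamma/\Gamma_0'\cong\ZZ$ and ties $\varsigma$ to nothing, so a priori the image of $\Gamma_1$ in $\Gamma/\Gamma_0'$ is merely a finite-index subgroup. Worse, the subgroups you allude to in your closing paragraph genuinely satisfy all the stated hypotheses: for $n\ge 2$ the subgroup $\Gamma_0'=\langle\updelta_1^{n}\upgamma,\updelta_2,\updelta_3\rangle$ contains $[\Gamma,\Gamma]=\langle\updelta_3^{r}\rangle$ and hence is normal, is isomorphic to $\HHH(nr)$ because $[\updelta_1^{n}\upgamma,\updelta_2]=\updelta_3^{nr}$, and has $\Gamma/\Gamma_0'\cong\ZZ$ generated by the class of $\updelta_1$ (with $\upgamma\equiv\updelta_1^{-n}$), while its projection to $\Gamma_0$ is the index-$n$ subgroup $\langle\updelta_1^{n},\updelta_2,\updelta_3\rangle$. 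So the missing surjectivity cannot be extracted from the hypotheses as written; there is no hidden ``compatibility'' to invoke.

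For comparison, the paper's proof derives both halves at once from the claim that $\Gamma/\Gamma_0'\cong\Upsilon_0\times\Upsilon_1$, where $\Upsilon_1=\Gamma_1/(\Gamma_0'\cap\Gamma_1)$ and $\Upsilon_0$ is $\Gamma_0$ modulo the projection of $\Gamma_0'$, and then uses indecomposability of $\ZZ$ together with Lemma~\ref{lemma:Heisenberg-finite-index} to exclude the case $\Upsilon_0\cong\ZZ$. In general, however, one only gets an extension of $\Upsilon_0$ by $\Upsilon_1$, not a direct product, and the subgroups above realize $\ZZ$ as an extension of $\ZZ/n\ZZ$ by $\ZZ$ in exactly this way. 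So the obstacle you ran into is real and sits at precisely the same point as in the paper's argument; for the intended application in Lemma~\ref{lemma:Heisenberg-direct}, where $\Gamma_0'=\Gamma\cap\hat{\Gamma}_0$, the surjectivity (equivalently, that $\Gamma_1$ surjects onto $\Gamma/\Gamma_0'$) would have to be extracted from that additional structure rather than from the bare condition $\Gamma/\Gamma_0'\cong\ZZ$.
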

\begin{proof}
Put $\Upsilon_1=\Gamma_1/\Gamma_0'\cap\Gamma_1$
and $\Upsilon_0=\Gamma_0/\varsigma(\Gamma_0')$. Then
$\Gamma/\Gamma_0'\cong\Upsilon_0\times\Upsilon_1$.
Therefore, either~$\Upsilon_0$ is trivial and $\Upsilon_1\cong\ZZ$,
or $\Upsilon_0\cong\ZZ$ and $\Upsilon_1$ is trivial.
In the former case $\varsigma$ provides an isomorphism
from $\Gamma_0'$ to $\Gamma_0$.
In the latter case $\Gamma_1\subset\Gamma_0'$ and the group $\varsigma(\Gamma_0')\cong\Gamma_0'/\Gamma_1$
is abelian by Lemma~\ref{lemma:Heisenberg-finite-index}. Thus the group
$\Gamma_0'$ is abelian as well, which is a contradiction.
\end{proof}

\begin{lemma}\label{lemma:Heisenberg-direct}
Suppose that $\Gamma$ is a normal subgroup
in a group~\mbox{$\hat{\Gamma}=\hat{\Gamma}_0\times\hat{\Gamma}_1$},
where~\mbox{$\hat{\Gamma}_0\cong\HHH(\hat{r})$}
and $\hat{\Gamma}_1\cong\ZZ$.
Then the group $G=\hat{\Gamma}/\Gamma$ is finite and has a normal abelian subgroup
of index at most~$r$.
\end{lemma}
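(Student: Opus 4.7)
The plan is to follow the strategy of Lemma~\ref{lemma:Heisenberg-semi-direct}, the main new difficulty being that $\Gamma_0$ is no longer characteristic in $\Gamma$ (as already observed before Lemma~\ref{lemma:nearly-unique}). As a substitute I will use $\Gamma_0':=\Gamma\cap\hat{\Gamma}_0$: this is normal in $\hat{\Gamma}$ as an intersection of two normal subgroups, and $G=\hat{\Gamma}/\Gamma$ is finite for obvious reasons, so $\Gamma_0'$ has finite index in $\hat{\Gamma}_0$. Hence Lemma~\ref{lemma:Heisenberg-finite-index} gives $\Gamma_0'\cong\HHH(r')$ for some $r'$. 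Since $\Gamma$ has a rank-$2$ centre while $\HHH(\hat{r})$ has a rank-$1$ one, one has $\Gamma\not\subset\hat{\Gamma}_0$, so the injection $\Gamma/\Gamma_0'\hookrightarrow\hat{\Gamma}/\hat{\Gamma}_0\cong\ZZ$ forces $\Gamma/\Gamma_0'\cong\ZZ$, and Lemma~\ref{lemma:nearly-unique} yields $r'=r$.

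Next I will pass to coordinates. A short argument valid in torsion-free nilpotent groups of class~$2$ (take $z\in\z(\Gamma)$ and $\hat{g}\in\hat{\Gamma}$; some power $\hat{g}^k$ lies in $\Gamma$, so $[z,\hat{g}]^k=[z,\hat{g}^k]=1$ and the central torsion-free element $[z,\hat{g}]$ must vanish) shows $\z(\Gamma)\subset\z(\hat{\Gamma})$. I may therefore write $\updelta_3=\hat{\updelta}_3^{c_3}$, $\upgamma=\hat{\updelta}_3^{c_0}\hat{\upgamma}^{d_0}$, and $\updelta_i=\hat{\updelta}_1^{a_i}\hat{\updelta}_2^{b_i}\hat{\updelta}_3^{c_i}\hat{\upgamma}^{d_i}$ for $i=1,2$. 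Non-abelianness of $\Gamma$ forces $e:=a_1b_2-a_2b_1\neq 0$ (else $[\updelta_1,\updelta_2]=1$ would give $\updelta_3=1$), and $\Gamma_0\cap\Gamma_1=\{1\}$ combined with $\updelta_3\ne 1$ forces $d_0\neq 0$. The relation $[\updelta_1,\updelta_2]=\updelta_3^r$ becomes $rc_3=\hat{r}e$, while conjugation of $\updelta_i$ by $\hat{\updelta}_1$ and $\hat{\updelta}_2$, together with normality of $\Gamma$, places $\hat{\updelta}_3^{\hat{r}a_i}$ and $\hat{\updelta}_3^{\hat{r}b_i}$ into $\Gamma\cap\langle\hat{\updelta}_3\rangle$. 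The most delicate step, and the main technical obstacle, is to verify that $\Gamma\cap\langle\hat{\updelta}_3\rangle=\langle\hat{\updelta}_3^{c_3}\rangle$ exactly; this is where $e\neq 0$ and $d_0\neq 0$ are used. Granted this, $c_3\mid\hat{r}g_*$ with $g_*:=\gcd(a_1,a_2,b_1,b_2)$, and combining with the elementary observation $g_*^2\mid e$ gives $g_*\mid r$.

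Finally I will construct the required subgroup directly inside $\hat{\Gamma}$. For any coprime pair $(s,t)\in\ZZ^2$, the subgroup $H_{s,t}:=\langle\hat{\updelta}_1^s\hat{\updelta}_2^t,\hat{\updelta}_3,\hat{\upgamma}\rangle$ is free abelian of rank $3$ (three pairwise commuting generators) and normal in $\hat{\Gamma}$, so $A_{s,t}:=H_{s,t}\Gamma/\Gamma$ is an abelian normal subgroup of $G$. Identifying $\hat{\Gamma}/H_{s,t}\cong\ZZ^2/\langle(s,t)\rangle\cong\ZZ$ and tracking the image of $\Gamma$ gives $[G:A_{s,t}]=|\gcd(sb_1-ta_1,\,sb_2-ta_2)|$, which is positive because $\Gamma$ is non-abelian. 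Smith normal form applied to the sublattice of $\ZZ^2$ generated by $(a_1,b_1)$ and $(a_2,b_2)$ produces a coprime $(s,t)$ for which this gcd equals $g_*$, and then $[G:A_{s,t}]=g_*\leq r$, as required.
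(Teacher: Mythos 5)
Your argument is correct, but after its first paragraph it departs from the paper's route. The paper's proof is much shorter: having identified $\Gamma_0'=\Gamma\cap\hat{\Gamma}_0\cong\HHH(r)$ via Lemma~\ref{lemma:nearly-unique} (exactly as you do), it simply applies Corollary~\ref{corollary:Heisenberg-quotients} to the pair $\Gamma_0'\subset\hat{\Gamma}_0$ to obtain a normal abelian subgroup $N$ of index at most $r$ in $G_0=\hat{\Gamma}_0/\Gamma_0'$, and then notes that $G$ is generated by $G_0$ and the central element $\bar{\upgamma}$, so that $\langle N,\bar{\upgamma}\rangle$ is the required subgroup. You instead redo the content of Lemma~\ref{lemma:Heisenberg-subgroups} and Corollary~\ref{corollary:Heisenberg-quotients} by hand inside the ambient group $\hat{\Gamma}$; as a result your first paragraph (and with it Lemma~\ref{lemma:nearly-unique}) is logically superfluous for your own argument, since the divisibility $g_*\mid r$ and the index computation for $A_{s,t}$ never use $r'=r$. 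What the computation buys is a slightly sharper conclusion --- the index of $A_{s,t}$ is exactly $\gcd(a_1,a_2,b_1,b_2)$ rather than the paper's bound $\gcd(a_1,b_1)$ --- at the cost of considerably more bookkeeping.

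Two steps are asserted rather than proved; both are true and easy to fill in. First, writing $\updelta_3=\hat{\updelta}_3^{c_3}$ with no $\hat{\upgamma}$-component does not follow merely from $\z(\Gamma)\subset\z(\hat{\Gamma})=\langle\hat{\updelta}_3,\hat{\upgamma}\rangle$; you need the extra observation that $\updelta_3^{r}=[\updelta_1,\updelta_2]\in[\hat{\Gamma},\hat{\Gamma}]\subset\langle\hat{\updelta}_3\rangle$ together with the fact that $\hat{\Gamma}/\langle\hat{\updelta}_3\rangle$ is torsion-free. Second, the ``delicate step'' $\Gamma\cap\langle\hat{\updelta}_3\rangle=\langle\hat{\updelta}_3^{c_3}\rangle$ does go through with the ingredients you name: write an element of $\Gamma$ in normal form $\updelta_1^{x}\updelta_2^{y}\updelta_3^{z}\upgamma^{w}$ and project to $\hat{\Gamma}/\langle\hat{\updelta}_3\rangle\cong\ZZ^3$; the conditions $e\neq 0$ and $d_0\neq 0$ force $x=y=w=0$, so the element is a power of $\updelta_3$. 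With these two points written out, your proof is complete.
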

\begin{proof}
The group $G$ is finite for obvious reasons.
Put $\Gamma_0'=\Gamma\cap\hat{\Gamma}_0$ and $G_0=\hat{\Gamma}_0/\Gamma_0'$.
By Lemma~\ref{lemma:Inoue-normal-subgroup}
one has $\Gamma=\Gamma_0' \rtimes \Gamma_1'$, where
$\Gamma_1'\cong\ZZ$ is generated by $\hat{\upgamma}^k\hat{\updelta}$ for some positive integer $k$, a generator $\hat{\upgamma}$
of $\hat{\Gamma}_1$, and an element $\hat{\updelta}\in \hat{\Gamma}_0$.
Thus $\Gamma/\Gamma_0'\cong\ZZ$. Since $\Gamma_0'$ is a subgroup of finite index in $\hat{\Gamma}_0$,
by Lemma~\ref{lemma:Heisenberg-finite-index} one has $\Gamma_0'\cong\HHH(r')$ for some $r'$.
By Lemma~\ref{lemma:nearly-unique} we know that $r'=r$. Thus
$G_0$ contains a normal abelian subgroup
$N$ of index at most $r$ by Corollary~\ref{corollary:Heisenberg-quotients}.
On the other hand, $G$ is generated by $G_0$ and the image $\bar{\upgamma}$ of~$\hat{\upgamma}$.
Since $\bar{\upgamma}$ is a central element in $G$, the group generated by $N$ and $\bar{\upgamma}$ is
a normal abelian subgroup of index at most~$r$ in~$G$.
\end{proof}

\section{Inoue surfaces}
\label{section:Inoue}

In this section we study automorphism groups of Inoue surfaces.

Inoue surfaces are quotients of $\CC\times \mathbb H$, where $\mathbb H$ is the upper
half-plane, by certain infinite discrete groups.
They were introduced by M.\,Inoue \cite{Inoue1974}.
These surfaces contain no curves and their invariants are as follows:
\[
\ad(X)=0,\quad \bb_1(X)=1, \quad \bb_2(X)=0,\quad \hh^{1,0}(X)=0,\quad \hh^{0,1}(X)=1.
\]

\begin{lemma}
\label{lemma-Inoue-quotient}
Let $X$ be an Inoue surface, and $G\subset \Aut(X)$ be a finite subgroup.
Then the action of $G$ on $X$ is free, and the quotient $\hat{X}=X/G$ is again
an Inoue surface.
\end{lemma}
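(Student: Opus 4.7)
The plan is to split the argument in two: first, show that the $G$-action on $X$ is free via a Lefschetz fixed-point computation; second, identify the smooth quotient $\hat{X}=X/G$ as an Inoue surface by checking its invariants against the Enriques--Kodaira classification. Since $X$ contains no curves, the fixed locus of any non-trivial $g\in G$ is a finite set of isolated points, each contributing $+1$ to the topological Lefschetz number $L(g)$. Using $\bb_0(X)=\bb_4(X)=1$, $\bb_2(X)=0$ and $\bb_1(X)=\bb_3(X)=1$, we obtain
$$
|\Fix(g)| \;=\; L(g) \;=\; 2 - \operatorname{tr}\bigl(g^{*}|H^{1}(X,\QQ)\bigr) - \operatorname{tr}\bigl(g^{*}|H^{3}(X,\QQ)\bigr).
$$
Since $H^{1}(X,\QQ)\cong\QQ$ is one-dimensional and $g$ has finite order, $g^{*}$ acts on it by a scalar $c(g)\in\{\pm 1\}$; Poincar\'e duality combined with the orientation-preserving nature of holomorphic automorphisms yields the same scalar on $H^{3}$, so $L(g)=2-2c(g)$. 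It then suffices to show $c(g)=+1$, whence $\Fix(g)=\varnothing$.

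To prove $c(g)=+1$, I identify $H^{1}(X,\QQ)$ with the torsion-free quotient of $\Gamma^{\mathrm{ab}}$, where $\Gamma=\pi_{1}(X)$ is the Wang group of the Inoue solvmanifold. Because $\Gamma_{0}$ is a characteristic subgroup of $\Gamma$ (by Lemma~\ref{lemma:lattice-exercise}(ii) for type $S_{M}$, and by its Heisenberg analog Lemma~\ref{lemma:Heisenberg-exercise}(iii) for the types $S^{\pm}$), the scalar $c(g)$ equals the sign of the automorphism induced by $g$ on $\Gamma/\Gamma_{0}\cong\ZZ$. The case $c(g)=-1$ would force $g$ to conjugate the defining integer matrix $M$ to $M^{-1}$. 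For type $S_{M}$, $M$ has a real eigenvalue $\alpha>1$ together with a complex-conjugate pair of modulus $\alpha^{-1/2}$, so the spectra of $M$ and of $M^{-1}$ are disjoint and no such conjugation exists in $\GL_{3}(\ZZ)$. An analogous eigenvalue argument for types $S^{\pm}$, or a direct appeal to the explicit form of $\Aut(X)$ in those cases, gives the same conclusion.

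With freeness established, $\hat{X}$ is a smooth compact complex surface. It inherits from $X$: the absence of curves (a curve in $\hat{X}$ would lift to a curve in $X$), vanishing algebraic dimension (meromorphic functions lift to $G$-invariant meromorphic functions on $X$, hence are constant), minimality (there are no curves, in particular no $(-1)$-curves), $\bb_{2}(\hat{X})=\dim H^{2}(X,\QQ)^{G}=0$, and---using the triviality of the $G$-action on $H^{1}(X,\QQ)$ shown above---also $\bb_{1}(\hat{X})=1$. Thus $\hat{X}$ is a minimal compact complex surface with $\bb_{1}=1$, $\bb_{2}=0$ and no curves, and the Enriques--Kodaira classification (together with the fact that Hopf surfaces contain elliptic curves) forces $\hat{X}$ to be an Inoue surface.

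The main obstacle I anticipate is the uniform treatment of the three types of Inoue surfaces in the verification that $c(g)=+1$: the $S_{M}$ case is clean because $M$ and $M^{-1}$ have obviously disjoint spectra, but for the types $S^{\pm}$ the relevant matrices lie in $\GL_{2}(\ZZ)$ with eigenvalues $\alpha,\alpha^{-1}$ that are already symmetric under inversion, so excluding the conjugacy $M\sim M^{-1}$ requires either a finer arithmetic argument or the input of the known explicit description of $\Aut(X)$ in those cases.
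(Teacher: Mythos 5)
Your reduction of freeness to the sign $c(g)$ of the $g$-action on $H^1(X,\QQ)$ is where the argument breaks, and it breaks exactly where you flag it: type $\mathrm{S^{(+)}}$. The obstruction you would need there --- that a hyperbolic matrix $M\in\SL_2(\ZZ)$ cannot be conjugate to $M^{-1}$ inside $\GL_2(\ZZ)$ --- is simply false. For instance, $M=\left(\begin{smallmatrix}2&1\\1&1\end{smallmatrix}\right)$ satisfies $AMA^{-1}=M^{-1}$ for $A=\left(\begin{smallmatrix}0&-1\\1&0\end{smallmatrix}\right)$, so no eigenvalue or ``finer arithmetic'' argument can close the gap; any repair must use the geometry of $X$ (or import the explicit description of $\Aut$ for $\mathrm{S^{(+)}}$ surfaces, which is exactly the external input a self-contained proof should avoid). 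Your treatment of $\mathrm{S_M}$ and $\mathrm{S^{(-)}}$ via disjointness of the spectra of $M$ and $M^{-1}$ is fine, but the lemma as stated covers all three types.

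The paper sidesteps the issue entirely: it makes no attempt to prove $c(g)=+1$ a priori. If $g$ (taken of prime order) has a fixed point, the same Lefschetz computation forces $c(g)=-1$, hence $g$ has order $2$ and exactly $n=2-2c(g)=4$ fixed points. One then passes to $\check X=X/\langle g\rangle$, which has four Du Val points of type $\mathrm{A}_1$, takes the minimal resolution $Y\to\check X$, and finds $\cc_1(Y)^2=0$ while $\chit(Y)=4+\chit(\check X)=6$; Noether's formula would give $\chi(\OOO_Y)=\tfrac{0+6}{12}=\tfrac12\notin\ZZ$, a contradiction. This handles all three types uniformly and needs no information about $\Aut(\Gamma)$. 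Your second half (identifying $\hat X$ via Enriques--Kodaira, the Bogomolov--Teleman classification of class VII surfaces with $\bb_2=0$, and the fact that Hopf surfaces contain curves) coincides with the paper's; note also that once freeness is known, $\bb_1(\hat X)=1$ already follows from $\chit(\hat X)=0$ and $\bb_2(\hat X)=0$, so the triviality of the $G$-action on $H^1$ is not needed there either.
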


\begin{proof}
Assume that the action of $G$ on $X$ is not free.
Let $g$ be an element of $G$ that acts on $X$ with fixed points.
To get a contradiction we may assume that the order of $g$ is prime.
Since $X$ contains no curves, the fixed point locus of $G$
consists of a finite number of points. Denote by $n$ the number of such points.
By the topological
Lefschetz fixed point formula, one has
\[
n=\sum\limits_{i=0}^4 (-1)^i\operatorname{tr}_{H^i(X,\RR)} g^*
= 2 - 2\operatorname{tr}_{H^1(X,\RR)} g^*.
\]
Hence the action of $g^*$ on $H^1(X,\RR)\cong\RR$ is not trivial.
This is possible only if $g$ is of order~$2$ and $n=4$.

Consider the quotient $\check{X}$ of $X$ by
the cyclic group generated by $g$. We see that $\check{X}$ has exactly $4$ singular points which are Du Val of type~$\mathrm{A}_1$.
Let~\mbox{$Y\to \check{X}$} be the minimal resolution of singularities.
Then
$$
\cc_1(Y)^2=\cc_1(\KKK_{\check{X}})^2=\frac 12 \cc_1(X)^2=0,
$$
and $\chit(Y)=4+\chit(\check{X})=6$.
This contradicts the Noether's formula, see e.g.~\cite[\S\,I.5]{BHPV-2004}.

Therefore, the action of $G$ on $X$ is free, and the quotient morphism $X\to \hat X=X/G$
is an unramified finite cover.
This implies that $\chit(\hat{X})=0$. Furthermore, one has
\[
\bb_2(\hat X)=\rk H^2(X,\ZZ)^G=0,
\]
and so $\bb_1(\hat X)=1$.
Therefore, by Enriques--Kodaira classification
$\hat X$ is a minimal surface of class VII.
Clearly, the surface $\hat X$ contains no curves, so that in particular $\bb_2(\hat{X})=0$. Thus
$\hat X$ is either a Hopf surface or an Inoue surface (see \cite{Bogomolov} and~\cite{Teleman-classification},
cf.~\cite{BBK}).
Since every Hopf surface contains a curve (see~\cite[Theorem~32]{Kodaira-structure-2}),
we conclude that~$\hat X$ is an Inoue surface.
\end{proof}

There are three types of Inoue surfaces:
$\mathrm{S_M}$, $\mathrm{S^{(+)}}$, and $\mathrm{S^{(-)}}$.
They are distinguished by the type
of their fundamental group $\Gamma=\pi_1(X)$, see~\cite{Inoue1974}:
\par\bigskip\noindent
{\rm
\setlength{\extrarowheight}{3pt}
\newcommand{\heading}[1]{\multicolumn{1}{c|}{#1}}
\newcommand{\headingl}[1]{\multicolumn{1}{c}{#1}}
\begin{tabularx}{0.97\textwidth}{p{0.06\textwidth}|p{0.1\textwidth}|p{0.7\textwidth}}
type & \heading{generators} & \multicolumn{1}{Y}{relations}
\\\Xhline{2\arrayrulewidth}
$\mathrm{S_M}$ & $\updelta_1, \updelta_2, \updelta_3, \upgamma$ &
$[\updelta_i,\updelta_j]=1$,
$\upgamma\updelta_i\upgamma^{-1}=\updelta_1^{m_{1,i}}\updelta_2^{m_{2,i}}\updelta_3^{m_{3,i}}$,
$(m_{j,i})\in \SL_3(\ZZ)$
\\\hline
$\mathrm{S^{(\pm)}}$ &
$\updelta_1, \updelta_2, \updelta_3, \upgamma$
&
$[\updelta_i,\updelta_3]=1$,
$[\updelta_1,\updelta_2]=\updelta_3^{r}$,
$\upgamma\updelta_i\upgamma^{-1}=\updelta_1^{m_{1,i}}\updelta_2^{m_{2,i}}\updelta_3^{p_i}$ for $i=1,2$,
$\upgamma\updelta_3\upgamma^{-1}=\updelta_3^{\pm 1}$,
$(m_{j,i})\in \GL_2(\ZZ)$, $\det (m_{j,i})=\pm 1$
\end{tabularx}
}
\par\bigskip
In the notation of \S\ref{section:lattices-semi-direct} and~\S\ref{section:Heisenberg-semi-direct},
one has $\Gamma\cong\Gamma_0\rtimes\Gamma_1$,
where $\Gamma_1\cong\ZZ$, while $\Gamma_0\cong\ZZ^3$
for Inoue surfaces of type $\mathrm{S_M}$, and $\Gamma_0\cong\HHH(r)$
for Inoue surfaces of types $\mathrm{S^{(\pm)}}$.
In the former case the matrix $M\in\SL_3(\ZZ)$ that defines the semi-direct product has
eigenvalues $\upalpha$, $\upbeta$, and $\bar \upbeta$, where $\upalpha\in \RR$,
$\upalpha>1$, and~\mbox{$\upbeta\notin\RR$}.
In the latter case the matrix $M\in\GL_2(\ZZ)$
that defines the action of~$\ZZ$ on~\mbox{$\HHH(r)/\z(\HHH(r))\cong\ZZ^2$} has
real eigenvalues $\upalpha$ and $\upbeta$,
where~\mbox{$\upalpha>1$} and $\upalpha\upbeta=\pm1$ depending on whether $\Gamma$ is of type~$\mathrm{S^{(+)}}$ or~$\mathrm{S^{(-)}}$,
see~\mbox{\cite[\S\S2--4]{Inoue1974}}.

\begin{lemma}\label{lemma:Inoue-group-types}
Let $\Gamma$ be a group of one of the types $\mathrm{S_M}$, $\mathrm{S^{(+)}}$, or $\mathrm{S^{(-)}}$.
Then
\begin{enumerate}
\item
$\Gamma$ is of type $\mathrm{S_M}$ if and only if $\Gamma$ contains a characteristic subgroup
isomorphic to~$\ZZ^3$;
\item
$\Gamma$ is of type $\mathrm{S^{(+)}}$ if and only if
$\Gamma$ contains no subgroups isomorphic to $\ZZ^3$ and~\mbox{$\z(\Gamma)\neq\{1\}$};
\item
$\Gamma$ is of type $\mathrm{S^{(-)}}$ if and only if $\Gamma$ contains no subgroups isomorphic to $\ZZ^3$
and~\mbox{$\z(\Gamma)=\{1\}$}.
\end{enumerate}
\end{lemma}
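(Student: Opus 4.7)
The plan is to read off each characterizing property from the explicit semi-direct product descriptions collected in the table and from the structural results about such products already established in \S\ref{section:lattices-semi-direct} and \S\ref{section:Heisenberg-semi-direct}. The key preliminary observation is that in all three cases the defining matrix $M$ has no eigen-value equal to $1$: for type $\mathrm{S_M}$ the eigen-values $\upalpha,\upbeta,\bar\upbeta$ satisfy $\upalpha>1$ and $\upbeta\notin\RR$; for type $\mathrm{S^{(\pm)}}$ the matrix $M\in\GL_2(\ZZ)$ has real eigen-values $\upalpha,\upbeta$ with $\upalpha>1$ and $\upalpha\upbeta=\pm 1$, so $|\upbeta|<1$.

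Next I would establish the three forward implications. For a group $\Gamma$ of type~$\mathrm{S_M}$, Lemma~\ref{lemma:lattice-exercise}(ii) gives that $\Gamma_0\cong\ZZ^3$ is characteristic in~$\Gamma$, proving the forward direction of~(i). For a group $\Gamma$ of type~$\mathrm{S^{(\pm)}}$, Remark~\ref{remark:Heisenberg-semi-direct-no-1} applies and gives that $\Gamma$ contains no subgroup isomorphic to~$\ZZ^3$. To decide the center, I would use Lemma~\ref{lemma:Heisenberg-semi-direct-center}(i): the relation $\upgamma\updelta_3\upgamma^{-1}=\updelta_3^{\pm 1}$ in the presentation forces $\det M=+1$ in type~$\mathrm{S^{(+)}}$ and $\det M=-1$ in type~$\mathrm{S^{(-)}}$, so by Lemma~\ref{lemma:Heisenberg-semi-direct-center}(ii) the center $\z(\Gamma)$ is non-trivial in the former case and trivial in the latter. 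This completes the forward directions of~(ii) and~(iii).

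Finally, the reverse directions are formal: the three conditions on the right-hand sides of~(i), (ii), (iii) are pairwise mutually exclusive (containing a characteristic $\ZZ^3$ forces containing a $\ZZ^3$ subgroup, and the last two conditions differ by the triviality of the center), and the three possible types for~$\Gamma$ are mutually exclusive and exhaustive. The forward implications established above therefore pair each type with exactly one of the three conditions, yielding the converse implications automatically. I do not expect any real obstacle here: everything reduces to verifying the eigen-value hypotheses of Lemma~\ref{lemma:lattice-exercise}, Remark~\ref{remark:Heisenberg-semi-direct-no-1}, and Lemma~\ref{lemma:Heisenberg-semi-direct-center} against the parameters of Inoue's construction.
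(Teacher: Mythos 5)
Your proposal is correct and follows exactly the paper's route: the paper's proof consists precisely of citing Lemma~\ref{lemma:lattice-exercise}(ii), Lemma~\ref{lemma:Heisenberg-semi-direct-center}(ii), and Remark~\ref{remark:Heisenberg-semi-direct-no-1}, and you have simply spelled out the eigen-value verifications and the standard trichotomy argument for the converse implications that the paper leaves implicit.
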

\begin{proof}
This follows from Lemmas~\ref{lemma:lattice-exercise}(ii)
and~\ref{lemma:Heisenberg-semi-direct-center}(ii)
and Remark~\ref{remark:Heisenberg-semi-direct-no-1}.
\end{proof}

\begin{corollary}
\label{corollary-Inoue-quotient-types}
Let $X$ be an Inoue surface, and $G\subset \Aut(X)$ be a finite subgroup.
Then the action of $G$ on $X$ is free, and the following assertions hold.
\begin{enumerate}
\item
If $X$ is of type $\mathrm{S_M}$, then so is $X/G$;

\item
If $X$ is of type $\mathrm{S^{(-)}}$, then so is $X/G$;

\item
If $X$ is of type $\mathrm{S^{(+)}}$, then $X/G$ is of type $\mathrm{S^{(+)}}$ or $\mathrm{S^{(-)}}$.
\end{enumerate}
\end{corollary}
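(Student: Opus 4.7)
The freeness of the action and the fact that $\hat X = X/G$ is again an Inoue surface are already delivered by Lemma~\ref{lemma-Inoue-quotient}, so the whole task reduces to identifying the type of $\hat X$. The plan is to read off the type from group-theoretic invariants of the fundamental group via Lemma~\ref{lemma:Inoue-group-types}. Since $X\to \hat X$ is an unramified finite Galois covering with Galois group $G$, the group $\Gamma = \pi_1(X)$ embeds as a normal subgroup of finite index in $\hat\Gamma = \pi_1(\hat X)$. So I only need to compare the properties "contains a subgroup isomorphic to $\ZZ^3$" and "has nontrivial center" for $\Gamma$ and $\hat\Gamma$.

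The two key elementary observations are as follows. First, $\hat\Gamma$ contains a subgroup isomorphic to $\ZZ^3$ if and only if $\Gamma$ does: one direction is trivial since $\Gamma\subset \hat\Gamma$; for the other, if $A\subset\hat\Gamma$ is isomorphic to $\ZZ^3$, then $A\cap\Gamma$ has finite index in $A$, hence is itself isomorphic to $\ZZ^3$. Second, because Inoue surfaces are $K(\pi,1)$-spaces (being quotients of $\CC\times\HH$ by free actions of discrete groups), both $\Gamma$ and $\hat\Gamma$ are torsion-free; consequently, if $z\in\z(\hat\Gamma)$ is nontrivial, then the smallest positive integer $k$ with $z^k\in\Gamma$ gives a nontrivial element of $\Gamma$ that is central in $\hat\Gamma$ and hence also central in $\Gamma$. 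Thus $\z(\hat\Gamma)\neq\{1\}$ implies $\z(\Gamma)\neq\{1\}$.

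With these two facts in hand, the three cases are immediate from Lemma~\ref{lemma:Inoue-group-types}. If $X$ is of type~$\mathrm{S_M}$, then $\Gamma$ contains $\ZZ^3$, hence so does $\hat\Gamma$, and therefore $\hat X$ must be of type~$\mathrm{S_M}$ (since the other two types contain no $\ZZ^3$). If $X$ is of type~$\mathrm{S^{(+)}}$ or~$\mathrm{S^{(-)}}$, then $\Gamma$ contains no $\ZZ^3$, so neither does $\hat\Gamma$, and $\hat X$ must be of type $\mathrm{S^{(+)}}$ or $\mathrm{S^{(-)}}$. Finally, if $X$ is of type $\mathrm{S^{(-)}}$, then $\z(\Gamma)=\{1\}$, so by the contrapositive of the second observation $\z(\hat\Gamma)=\{1\}$, and $\hat X$ must be of type~$\mathrm{S^{(-)}}$.

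None of these steps looks genuinely delicate; the only point that requires a moment's care is the torsion-freeness of $\hat\Gamma$, which one takes from the fact that $\hat X$ is an Inoue surface (hence aspherical), together with Lemma~\ref{lemma-Inoue-quotient}. Everything else is bookkeeping around Lemma~\ref{lemma:Inoue-group-types}.
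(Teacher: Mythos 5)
Your proof is correct and follows the same route as the paper: reduce everything to the group-theoretic characterization of the three types (Lemma~\ref{lemma:Inoue-group-types}) applied to the finite-index normal inclusion $\Gamma\subset\hat{\Gamma}$ furnished by Lemma~\ref{lemma-Inoue-quotient}. The only difference is that you spell out the two transfer arguments --- intersecting a $\ZZ^3$ with the finite-index subgroup $\Gamma$, and pushing a nontrivial central element into $\Gamma$ by taking a power and using torsion-freeness --- which the paper leaves implicit in the phrase ``everything follows.''
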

\begin{proof}
Put $\hat{X}=X/G$. Then the action of $G$ on $X$ is free, and $X$ is an
Inoue surface by Lemma~\ref{lemma-Inoue-quotient}. Put
$\hat{\Gamma}=\pi_1(\hat{X})$. Then $\hat{\Gamma}$ is a group of one of the
types $\mathrm{S_M}$, $\mathrm{S^{(+)}}$, or $\mathrm{S^{(-)}}$, and~\mbox{$\Gamma\subset\hat{\Gamma}$} is a normal subgroup of finite index. Now everything
follows from Lemma~\ref{lemma:Inoue-group-types}.
\end{proof}

\begin{lemma}\label{lemma:Jordan-Inoue-M}
Let $X$ be an Inoue surface of type $\mathrm{S_M}$. Then the group $\Aut(X)$ is Jordan.
\end{lemma}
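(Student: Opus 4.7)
\medskip

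The plan is to deduce this immediately from the machinery already set up in~\S\ref{section:lattices-semi-direct} together with Corollary~\ref{corollary-Inoue-quotient-types}. Let $G\subset\Aut(X)$ be an arbitrary finite subgroup; our goal is to bound by a constant depending only on $X$ the index of some normal abelian subgroup of $G$.

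First I would use Corollary~\ref{corollary-Inoue-quotient-types}(i) to obtain that $G$ acts freely on $X$ and that the quotient $\hat{X}=X/G$ is again an Inoue surface of type $\mathrm{S_M}$. Freeness of the action means that $X\to\hat{X}$ is an unramified Galois cover with Galois group $G$, so on the level of fundamental groups we get an embedding $\Gamma=\pi_1(X)\hookrightarrow\hat{\Gamma}=\pi_1(\hat{X})$ as a normal subgroup of finite index, with quotient canonically identified with $G$. By the description recalled just before Lemma~\ref{lemma:Inoue-group-types}, both $\Gamma$ and $\hat{\Gamma}$ are of the form $\Gamma_0\rtimes\Gamma_1$ (respectively $\hat{\Gamma}_0\rtimes\hat{\Gamma}_1$) with $\Gamma_0,\hat{\Gamma}_0\cong\ZZ^3$ and $\Gamma_1,\hat{\Gamma}_1\cong\ZZ$, and $\Gamma$ is defined by a matrix $M\in\SL_3(\ZZ)$.

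Next I would verify the hypotheses of Lemma~\ref{lemma:lattices-semi-direct} for this $M$. The eigen-values of $M$ are a real number $\upalpha>1$ and a complex conjugate pair $\upbeta,\bar{\upbeta}\notin\RR$. Since $\det M=1$, we have $\upalpha|\upbeta|^{2}=1$, hence $|\upbeta|<1$. In particular no eigen-value of $M$ equals $1$, and $\upalpha$ is a real number strictly greater than $1$, so it cannot be a root of unity. Therefore Lemma~\ref{lemma:lattices-semi-direct} applies and produces a constant $\nu=\nu(\Gamma)$ such that the quotient $\hat{\Gamma}/\Gamma\cong G$ contains a normal abelian subgroup of index at most $\nu$. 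Since $\Gamma$ is determined by $X$ alone and does not depend on the choice of finite subgroup $G$, the constant $\nu$ serves uniformly, and we conclude that $\Aut(X)$ is Jordan.

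There is no real obstacle left at this stage: all the hard work has been absorbed into Lemma~\ref{lemma:lattices-semi-direct} (which in turn relied on Lemma~\ref{lemma:matrix-root} and hence on Minkowski-type arithmetic arguments) and into the topological input Lemma~\ref{lemma-Inoue-quotient} that guarantees a finite automorphism group acts freely with Inoue quotient. The only point requiring a moment's care is matching our $M$ to the hypotheses of Lemma~\ref{lemma:lattices-semi-direct}, i.e.\ ruling out the eigen-value $1$ and ensuring not all eigen-values are roots of unity; for type $\mathrm{S_M}$ both follow at once from the presence of the real eigen-value $\upalpha>1$ and the relation $\det M=1$.
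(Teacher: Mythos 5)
Your proof is correct and follows essentially the same route as the paper: reduce via Corollary~\ref{corollary-Inoue-quotient-types} to a normal inclusion of fundamental groups $\Gamma\subset\hat{\Gamma}$ of the form treated in \S\ref{section:lattices-semi-direct}, then invoke Lemma~\ref{lemma:lattices-semi-direct}. Your explicit check that $M$ has no eigen-value equal to $1$ and an eigen-value that is not a root of unity (via $\upalpha>1$ and $\det M=1$) is a detail the paper leaves implicit in its description of the $\mathrm{S_M}$ matrices, but it is exactly the right verification.
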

\begin{proof}
Let $G\subset \Aut(X)$ be a finite subgroup, and put $\hat{X}=X/G$.
By Corollary~\ref{corollary-Inoue-quotient-types}
the action of $G$ on $X$ is free, and $\hat{X}$ is also an Inoue surface of type $\mathrm{S_M}$.
Put $\Gamma=\pi_1(X)$
and~\mbox{$\hat{\Gamma}=\pi_1(\hat{X})$}. Then
$\Gamma$ is a normal subgroup of $\hat{\Gamma}$, and $\hat{\Gamma}/\Gamma\cong G$;
moreover, both~$\Gamma$ and~$\hat{\Gamma}$ are semi-direct products
as in~\S\ref{section:lattices-semi-direct}.
Now it follows from Lemma~\ref{lemma:lattices-semi-direct}
that there is a constant $\nu$ that depends only on $\Gamma$ (that is, only on $X$),
such that $G$ has a normal abelian subgroup of index at most~$\nu$.
\end{proof}

Now we deal with Inoue surfaces of types $\mathrm{S^{(+)}}$ and $\mathrm{S^{(-)}}$.

\begin{lemma}\label{lemma:Jordan-Inoue-pm}
Let $X$ be an Inoue surface of type $\mathrm{S^{(+)}}$ or $\mathrm{S^{(-)}}$.
Then the group $\Aut(X)$ is Jordan.
\end{lemma}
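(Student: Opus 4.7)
The plan is to imitate the proof of Lemma~\ref{lemma:Jordan-Inoue-M}, replacing Lemma~\ref{lemma:lattices-semi-direct} by its Heisenberg analogue Lemma~\ref{lemma:Heisenberg-semi-direct}. Given a finite subgroup $G\subset\Aut(X)$, I would first set $\hat{X}=X/G$. By Corollary~\ref{corollary-Inoue-quotient-types} the action of $G$ is free and $\hat X$ is again an Inoue surface of type $\mathrm{S^{(+)}}$ or $\mathrm{S^{(-)}}$. Setting $\Gamma=\pi_1(X)$ and $\hat\Gamma=\pi_1(\hat X)$, the freeness of the action gives an identification $\hat\Gamma/\Gamma\cong G$ with $\Gamma$ a normal subgroup of finite index in $\hat\Gamma$.

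Next I would observe that by the classification recalled just before Lemma~\ref{lemma:Inoue-group-types}, both $\Gamma$ and $\hat\Gamma$ are exactly the semi-direct products considered in Section~\ref{section:Heisenberg-semi-direct}: one has $\Gamma\cong\HHH(r)\rtimes\ZZ$ and $\hat\Gamma\cong\HHH(\hat r)\rtimes\ZZ$ for suitable $r,\hat r$. To apply Lemma~\ref{lemma:Heisenberg-semi-direct} I need to check that the matrix $M\in\GL_2(\ZZ)$ defining the action on $\HHH(r)/\z(\HHH(r))\cong\ZZ^2$ has eigenvalues that are not roots of unity. This is immediate from the explicit description of types $\mathrm{S^{(\pm)}}$ given in~\cite[\S\S2--4]{Inoue1974}: the eigenvalues $\upalpha,\upbeta$ of $M$ are real with $\upalpha>1$ and $\upalpha\upbeta=\pm 1$, hence $|\upalpha|\neq 1$ and $|\upbeta|\neq 1$, which rules out roots of unity (and also rules out the eigenvalue $1$, so the hypothesis of Lemma~\ref{lemma:Heisenberg-exercise} needed inside Lemma~\ref{lemma:Heisenberg-semi-direct} is also satisfied).

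With these verifications in place, Lemma~\ref{lemma:Heisenberg-semi-direct} delivers a constant $\nu=\nu(\Gamma)$, depending only on $X$, such that $G=\hat\Gamma/\Gamma$ contains a normal abelian subgroup of index at most $\nu$. Since the same $\nu$ works for every finite $G\subset\Aut(X)$, the group $\Aut(X)$ satisfies Definition~\ref{definition:Jordan} and is therefore Jordan.

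I do not expect a real obstacle in this proof: all the group-theoretic work has been front-loaded into Sections~\ref{section:Heisenberg}--\ref{section:Heisenberg-semi-direct}, and the only non-formal step is the eigenvalue check, which is settled by Inoue's explicit construction. The one place where care is needed is making sure that the quotient $\hat\Gamma$ really is a semi-direct product of the form treated in Lemma~\ref{lemma:Heisenberg-semi-direct}, i.e.\ that $\hat\Gamma$ is of type $\mathrm{S^{(\pm)}}$ rather than of type $\mathrm{S_M}$; this is precisely what Corollary~\ref{corollary-Inoue-quotient-types} guarantees, so no extra argument is required.
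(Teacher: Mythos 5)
Your proposal is correct and follows essentially the same route as the paper's proof: reduce via Corollary~\ref{corollary-Inoue-quotient-types} to a normal inclusion of fundamental groups $\Gamma\subset\hat\Gamma$ of the form treated in \S\ref{section:Heisenberg-semi-direct}, then invoke Lemma~\ref{lemma:Heisenberg-semi-direct}. Your explicit verification of the eigenvalue hypothesis ($\upalpha>1$, $\upalpha\upbeta=\pm1$, hence no roots of unity) is a detail the paper leaves implicit, relying on the description of the matrix $M$ recalled before Lemma~\ref{lemma:Inoue-group-types}.
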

\begin{proof}
Let $G\subset \Aut(X)$ be a finite subgroup, and put $\hat{X}=X/G$.
By Corollary~\ref{corollary-Inoue-quotient-types}
the action of $G$ on $X$ is free, and $\hat{X}$ is also an Inoue surface of type
$\mathrm{S^{(+)}}$ or $\mathrm{S^{(-)}}$.
Put~\mbox{$\Gamma=\pi_1(X)$}
and $\hat{\Gamma}=\pi_1(\hat{X})$. Then
$\Gamma$ is a normal subgroup of $\hat{\Gamma}$, and $\hat{\Gamma}/\Gamma\cong G$;
moreover, both~$\Gamma$ and~$\hat{\Gamma}$ are semi-direct products
as in~\S\ref{section:Heisenberg-semi-direct}.
Now it follows from Lemma~\ref{lemma:Heisenberg-semi-direct}
that there is a constant $\nu$ that depends only on $\Gamma$ (that is, only on $X$),
such that $G$ has a normal abelian subgroup of index at most~$\nu$.
\end{proof}

We summarize the results of Lemmas~\ref{lemma:Jordan-Inoue-M}
and~\ref{lemma:Jordan-Inoue-pm}
as follows.

\begin{corollary}\label{corollary:Inoue}
Let $X$ be an Inoue surface. Then the group $\Aut(X)$ is Jordan.
\end{corollary}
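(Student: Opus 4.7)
The plan is to reduce to the two preceding lemmas via Inoue's classification of these surfaces. Every Inoue surface $X$ is of exactly one of the three types $\mathrm{S_M}$, $\mathrm{S^{(+)}}$, or $\mathrm{S^{(-)}}$ (see~\cite{Inoue1974}), and these types are intrinsic, being characterised by group-theoretic properties of $\pi_1(X)$ as recorded in Lemma~\ref{lemma:Inoue-group-types}. So the first step would simply be to split into these three cases, after which Lemma~\ref{lemma:Jordan-Inoue-M} handles type~$\mathrm{S_M}$ while Lemma~\ref{lemma:Jordan-Inoue-pm} handles both of the remaining types simultaneously.

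It is worth spelling out the common structure of those two lemmas, to make the reduction transparent. Given a finite subgroup $G \subset \Aut(X)$, Lemma~\ref{lemma-Inoue-quotient} first shows that $G$ acts freely on~$X$, so the quotient $\hat{X} = X/G$ is again an Inoue surface; Corollary~\ref{corollary-Inoue-quotient-types} moreover controls the type of $\hat{X}$ well enough to stay inside the appropriate case (the only mild subtlety being that a surface of type~$\mathrm{S^{(+)}}$ may have a quotient of type~$\mathrm{S^{(-)}}$, which is harmless because both are treated together). Passing to fundamental groups yields a normal inclusion of discrete groups $\Gamma = \pi_1(X) \hookrightarrow \hat{\Gamma} = \pi_1(\hat{X})$ with quotient isomorphic to~$G$, and both $\Gamma$ and $\hat{\Gamma}$ are semi-direct products of the kind analysed in~\S\ref{section:lattices-semi-direct} or~\S\ref{section:Heisenberg-semi-direct}. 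The Jordan bound then comes from Lemma~\ref{lemma:lattices-semi-direct} or Lemma~\ref{lemma:Heisenberg-semi-direct}, which produces a constant $\nu = \nu(\Gamma)$ depending only on~$X$.

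The main obstacle has in fact been absorbed into the earlier sections: the number-theoretic input of Lemma~\ref{lemma:matrix-root}, controlling how many $k$-th roots a fixed matrix in $\GL_n(\ZZ)$ can have, is what bounds the $\Gamma_1$-component of the index $[\hat{\Gamma}:\Gamma]$, and in the Heisenberg case Corollary~\ref{corollary:Heisenberg-quotients} simultaneously bounds the $\Gamma_0$-component. Once these ingredients are in place, the corollary itself is just the bookkeeping needed to assemble the three cases into a single statement.
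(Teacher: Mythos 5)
Your proposal is correct and matches the paper's argument exactly: the corollary is just the combination of Lemma~\ref{lemma:Jordan-Inoue-M} and Lemma~\ref{lemma:Jordan-Inoue-pm}, which together cover all three types of Inoue surfaces. The additional discussion of how those lemmas work is accurate but not needed for the corollary itself.
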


\begin{remark}
There are certain types of minimal compact complex surfaces
of class~VII whose automorphism groups are studied in details, for instance, hyperbolic and parabolic Inoue surfaces,
see~\cite{Pinkham} and~\cite{Fujiki-ParabolicInoue}.
Note that surfaces of both of these types have positive second Betti numbers
(and thus they are not to be confused with Inoue surfaces we deal with in this section).
\end{remark}

\section{Kodaira surfaces}
\label{section:Kodaira}

In this section we study automorphism groups of Kodaira surfaces.
Our approach here is similar to what happens in~\S\ref{section:Inoue}.

Recall (see e.g. \cite[\S\,V.5]{BHPV-2004}) that a Kodaira surface
is a compact complex surface of Kodaira dimension $0$ with odd first Betti number.
There are two types of Kodaira surfaces: primary and secondary ones.
A primary Kodaira surface
is a compact complex surface with
the following invariants \cite[\S6]{Kodaira-structure-1}:
\[
\newcommand{\cquad}{\hspace{4pt}}
\KKK_X \sim 0, \cquad
\ad(X)=1,\cquad
\bb_1(X)=3,\cquad \bb_2(X)=4,\cquad \chit(X)=0,\cquad \hh^{0,1}(X)=2,\cquad
\hh^{0,2}(X)=1.
\]
A secondary Kodaira surface is a quotient of a primary Kodaira surface by a
free action of a finite cyclic group.

Let $X$ be a primary Kodaira surface.
The universal cover of $X$ is isomorphic to $\CC^2$, and the fundamental group $\Gamma=\pi_1(X)$
has the following presentation:
\begin{equation}
\label{equation-fundamenta-group-Kodaira-surface}
\Gamma=\langle \updelta_1,\updelta_2,\updelta_3,\upgamma
\mid [\updelta_1, \updelta_2]= \updelta_3^r,\
[\updelta_i, \updelta_3]=[\updelta_i,\upgamma]=1
\rangle,
\end{equation}
where $r$ is a positive integer \cite[\S6]{Kodaira-structure-1}.
In the notation of \S\ref{section:Heisenberg-direct} one has~\mbox{$\Gamma\cong\HHH(r)\times\ZZ$}.

Denote by $\overline{\Aut}(X)\subset \Aut (X)$ the subgroup that consists
of all elements acting trivially on $H^*(X,\QQ)$ and $H^*(B,\QQ)$.

\begin{lemma}[{cf. Lemma~\ref{lemma-Inoue-quotient}}]
\label{lemma:again-Kodaira}
Let $X$ be a primary Kodaira surface, and
$G\subset \overline{\Aut}(X)$ be a finite subgroup.
Then the action of $G$ on $X$ is free, and the quotient $\hat{X}=X/G$
is again a primary Kodaira surface.
\end{lemma}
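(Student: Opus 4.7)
The plan is to mirror Lemma~\ref{lemma-Inoue-quotient}: first show that $G$ acts freely on $X$, then identify $\hat X = X/G$ via the Enriques--Kodaira classification.

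For freeness, I would suppose for contradiction that some $g \in G$ of prime order $p$ fixes a point $x_0 \in X$. The Albanese map $\pi\colon X \to B$ is canonical and hence $g$-equivariant, so $g$ induces an automorphism $\bar g$ of the elliptic curve $B$. Since $g$ acts trivially on $H^*(B,\QQ)$, the map $\bar g$ is a translation; as it fixes $\pi(x_0)$, we get $\bar g = \id$, and so $g$ is fiber-preserving covering the identity on $B$. Because $\pi$ is a principal elliptic bundle, the monodromy on $R^1\pi_*\QQ$ is trivial, and inspection of the Leray spectral sequence shows that in total degree~$2$ only the graded piece $E_\infty^{1,1}=E_2^{1,1}$ survives, giving $H^2(X,\QQ)\cong H^1(B,\QQ)\otimes H^1(F,\QQ)$ as a $G$-module, for a fiber $F$. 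The trivial $g$-action on both $H^2(X,\QQ)$ and $H^1(B,\QQ)$ then forces the induced action of $g|_F$ on $H^1(F,\QQ)$ to be trivial, so $g|_F$ is a translation of the elliptic curve $F$. A fiber-preserving automorphism of the principal $E$-bundle $X \to B$ covering the identity and restricting to translations on fibers is specified via the $E$-action by a single holomorphic map $t\colon B \to E$; since $g$ has order $p$, $t$ takes values in the finite torsion subgroup $E[p]$, hence is constant, and the fixed-point assumption forces $t = 0$. This gives $g = \id$, contradicting $g\neq 1$.

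For the quotient, $X \to \hat X$ is étale. Since $G$ acts trivially on $H^*(X,\QQ)$, one has $H^*(\hat X,\QQ)\cong H^*(X,\QQ)$, so $\bb_1(\hat X) = 3$, $\bb_2(\hat X) = 4$, and $\chit(\hat X) = 0$; the pullback of $\KKK_{\hat X}$ equals $\KKK_X \sim 0$, so $\KKK_{\hat X}$ is torsion; and $\hat X$ is minimal since $X$ contains no rational curves. By the Enriques--Kodaira classification, a minimal compact complex surface with torsion canonical class and first Betti number~$3$ must be a primary Kodaira surface.

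The step I expect to require the most care is the cohomology-to-translation argument: the hypothesis $g \in \overline{\Aut}(X)$ is precisely what rules out fiber-preserving automorphisms behaving on fibers like the $[-1]$-involution, which would act as $-1$ on $H^1(F,\QQ)$ and thus non-trivially on $H^2(X,\QQ)$ via the identification above. Without this cohomological assumption the freeness claim would genuinely fail, and the rigidity that reduces $g$ to a constant global translation would break down.
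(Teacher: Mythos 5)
Your proof is correct, and while the quotient step is essentially the paper's (\'etale cover, invariance of $\varkappa$, $\bb_1$ and minimality, then Enriques--Kodaira), your freeness argument takes a genuinely different route. The paper stays cohomological: every curve on $X$ is a fiber of the algebraic reduction $\phi\colon X\to B$, no fiber is pointwise fixed because $g$ acts on the elliptic base without fixed points, so the fixed locus is finite, and the topological Lefschetz formula gives
$$
\sum\limits_{i=0}^4(-1)^i\operatorname{tr}_{H^i(X,\RR)}g^*=\sum\limits_{i=0}^4(-1)^i\bb_i(X)=\chit(X)=0
$$
fixed points. You instead prove the stronger rigidity statement that any $g\in\overline{\Aut}(X)$ of prime order with a fixed point is the identity: $\bar g$ is a translation of $B$ fixing a point, hence trivial; the Leray identification $H^2(X,\QQ)\cong H^1(B,\QQ)\otimes H^1(F,\QQ)$ checks out (the monodromy of a principal bundle is trivial, $E_\infty^{2,0}=E_\infty^{0,2}=0$ by counting against $\bb_1=3$ and $\bb_2=4$, and $E_\infty^{1,1}=E_2^{1,1}$ has dimension $4$), so $g$ acts by translations on fibers, and the resulting section of $E[p]$ over the connected base is constant and killed by the fixed point. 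What each buys: the paper's argument is shorter and runs in exact parallel with Lemma~\ref{lemma-Inoue-quotient}; yours is more work but is airtight precisely where the paper is terse --- the phrase ``$g$ acts on $B$ without fixed points'' implicitly uses triviality on $H^1(B,\QQ)$ and leaves the case $\bar g=\id$ (where whole fibers could a priori be fixed) to the reader, whereas your argument handles that case explicitly --- and it pinpoints exactly where the hypothesis $G\subset\overline{\Aut}(X)$ is needed, namely to exclude the $[-1]$-type action on fibers.
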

\begin{proof}
Let $\phi\colon X\to B$ be the algebraic reduction of $X$.
Then~$B$ is an elliptic curve, and
$\phi$ is a principal elliptic fibration~\mbox{\cite[\S6]{Kodaira-structure-1}}, \mbox{\cite[\S\,V.5]{BHPV-2004}}.
Furthermore, $\phi$ is equivariant with respect to~\mbox{$\Aut(X)$}.

Let $g$ be an element of $G$.
Since the curve $B$ is elliptic, we see that $g$ acts on $B$ without fixed points.
This means that there are no fibers of $\phi$ that consist of points fixed by $g$.
On the other hand, every curve on $X$ is a fiber of $\phi\colon X\to B$. Indeed, otherwise
one can construct a curve on $X$ with a positive self-intersection, which would
imply that $X$ is projective, see~\mbox{\cite[\S\,IV.6]{BHPV-2004}}.
Hence there are no curves that consist of points fixed by $g$ on $X$ at all.
Now the topological Lefschetz fixed point formula shows that
the number of points on $X$ fixed by $g$ equals
$$
\sum\limits_{i=0}^4 (-1)^i\operatorname{tr}_{H^i(X,\RR)} g^*
= \sum\limits_{i=0}^4 \bb_i(X)=0.
$$

Therefore, the action of $G$ on $X$ is free, so that $\hat{X}$ is a smooth surface and the quotient morphism
$X\to \hat{X}$ is an unramified finite cover. Hence
$\varkappa(\hat{X})=\varkappa(X)=0$. Moreover, we have
\[
\cc_1(\hat{X})^2=\cc_1(X)^2=0.
\]
This means that the surface $\hat{X}$ is minimal.
Since $G\subset \overline{\Aut}(X)$, we have
\[
\bb_1(\hat{X})=\bb_1(X)=3.
\]
Therefore, $\hat{X}$ is a primary Kodaira surface
by Kodaira--Enriques classification.
\end{proof}

\begin{lemma}
\label{lemma:primary-Kodaira-surface}
Let $X$ be a primary Kodaira surface.
Then the group $\Aut(X)$ is Jordan.
\end{lemma}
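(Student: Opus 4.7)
The plan is to follow exactly the pattern used for Inoue surfaces in Lemmas~\ref{lemma:Jordan-Inoue-M} and~\ref{lemma:Jordan-Inoue-pm}, with Lemma~\ref{lemma:Heisenberg-direct} playing the role of the main technical input. The new ingredient, compared to the Inoue case, is a preliminary reduction from $\Aut(X)$ to the subgroup $\overline{\Aut}(X)$ acting trivially on cohomology; this is needed because for a primary Kodaira surface the topological Lefschetz formula only rules out fixed points after one knows that the action on $H^*(X,\QQ)$ (and $H^*(B,\QQ)$) is trivial.

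First, I would consider the natural homomorphism
\[
\Aut(X)\longrightarrow \GL\bigl(H^*(X,\ZZ)/\text{torsion}\bigr)\times \GL\bigl(H^*(B,\ZZ)/\text{torsion}\bigr),
\]
whose kernel is $\overline{\Aut}(X)$. Choosing integral bases, the image lies in a product of groups of the form $\GL_n(\ZZ)\subset \GL_n(\QQ)$, so by Theorem~\ref{theorem:Minkowski} the quotient $\Aut(X)/\overline{\Aut}(X)$ has bounded finite subgroups. By Lemma~\ref{lemma:group-theory} it therefore suffices to prove that $\overline{\Aut}(X)$ is Jordan.

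Next, let $G\subset\overline{\Aut}(X)$ be any finite subgroup. By Lemma~\ref{lemma:again-Kodaira} the action of $G$ on $X$ is free, and the quotient $\hat{X}=X/G$ is again a primary Kodaira surface. Setting $\Gamma=\pi_1(X)$ and $\hat\Gamma=\pi_1(\hat X)$, the covering $X\to\hat X$ yields a short exact sequence
\[
1\longrightarrow\Gamma\longrightarrow\hat\Gamma\longrightarrow G\longrightarrow 1,
\]
with $\Gamma$ normal of finite index in $\hat\Gamma$. By \eqref{equation-fundamenta-group-Kodaira-surface} and the discussion following it, both $\Gamma$ and $\hat\Gamma$ are direct products of the form considered in~\S\ref{section:Heisenberg-direct}: one has $\Gamma\cong \HHH(r)\times\ZZ$ and $\hat\Gamma\cong \HHH(\hat r)\times\ZZ$ for some positive integers $r$ and $\hat r$, where $r$ depends only on $X$.

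Finally, Lemma~\ref{lemma:Heisenberg-direct} applies directly to the inclusion $\Gamma\subset\hat\Gamma$ and yields a normal abelian subgroup of $G\cong\hat\Gamma/\Gamma$ of index at most $r$. Since $r$ is determined by $X$ alone and does not depend on the choice of $G$, this exhibits a uniform Jordan constant for $\overline{\Aut}(X)$, and hence, combined with the first reduction, for $\Aut(X)$. The main (minor) obstacle is simply the cohomological reduction: one must verify that the quotient of $\Aut(X)$ by the cohomologically trivial part embeds into $\GL_n(\ZZ)$ so that Minkowski applies; once this is in place, the group-theoretic machinery of~\S\ref{section:Heisenberg-direct} does all the real work.
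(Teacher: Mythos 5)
Your proposal is correct and follows the paper's proof essentially verbatim: the reduction to $\overline{\Aut}(X)$ via Theorem~\ref{theorem:Minkowski} and Lemma~\ref{lemma:group-theory}, the free-action and quotient statement of Lemma~\ref{lemma:again-Kodaira}, and the application of Lemma~\ref{lemma:Heisenberg-direct} to $\Gamma\subset\hat\Gamma$ are exactly the steps the paper takes. The only difference is that you spell out the cohomological representation underlying the Minkowski reduction slightly more explicitly, which the paper leaves implicit.
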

\begin{proof}
Let $G\subset \Aut(X)$ be a finite subgroup.
By Theorem~\ref{theorem:Minkowski} and Lemma~\ref{lemma:group-theory}
we can assume that~\mbox{$G\subset \overline{\Aut}(X)$}.
Put $\hat{X}=X/G$.
It follows from Lemma~\ref{lemma:again-Kodaira}
that $G$ acts freely on~$X$, and $\hat{X}$ is a primary Kodaira surface.
Put $\Gamma=\pi_1(X)$
and $\hat{\Gamma}=\pi_1(\hat{X})$. Then
$\Gamma$ is a normal subgroup of $\hat{\Gamma}$, and $\hat{\Gamma}/\Gamma\cong G$;
moreover, both $\Gamma$ and $\hat{\Gamma}$ are
as in~\S\ref{section:Heisenberg-direct}.
Now it follows from Lemma~\ref{lemma:Heisenberg-direct}
that there is a constant $r$ that depends only on $\Gamma$ (that is, only on~$X$),
such that $G$ has a normal abelian subgroup of index at most~$r$.
\end{proof}

%\bibliography{non-proj}
%\bibliographystyle{alpha}

\end{document}